\documentclass{amsart}

\usepackage[utf8]{inputenc}
\usepackage{lmodern}
\usepackage[T1]{fontenc}

\usepackage{amsmath,amsthm,amsfonts,amssymb}
\usepackage{mathtools}	

\usepackage{enumerate}  
\usepackage{enumitem}	
\usepackage[colorlinks=true,urlcolor=blue,
citecolor=red,linkcolor=blue,linktocpage,pdfpagelabels,
bookmarksnumbered,bookmarksopen]{hyperref}  
\usepackage{stmaryrd}  

\numberwithin{equation}{section}

\newtheorem{Theorem}{Theorem}[section]
\newtheorem{Lemma}[Theorem]{Lemma}
\newtheorem{Property}[Theorem]{Property}
\theoremstyle{definition}
\newtheorem{Definition}{Definition}[section]
\theoremstyle{remark}
\newtheorem{Remark}[Definition]{Remark}

\newcommand{\R}{\mathbb{R}}  
\newcommand{\N}{\mathbb{N}}  
\newcommand{\C}{\mathbb{C}}  

\newcommand{\F}{\mathcal{F}}	
\newcommand{\bH}{{\bf H}}
\newcommand{\bL}{{\bf L}}
\newcommand{\bC}{{\bf C}}

\DeclareMathOperator{\sgn}{sgn}	


\begin{document}

\title[Fractional hyperbolic problems in the theory of viscoelasticity]{Variational formulation for fractional hyperbolic problems in the theory of viscoelasticity}

\author[J. Bravo-Castillero]{Julián Bravo-Castillero}
\address{{\rm (J. Bravo-Castillero)} Instituto de Investigaciones en Matem\'aticas Aplicadas y en Sistemas \\ Universidad Nacional Aut\'onoma de M\'exico \\ Unidad Académica del IIMAS en el Estado de Yucatán \\ Parque Científico y Tecnológico de Yucatán, CP 97302, Mérida, Yucatán, México}
\email{julian@mym.iimas.unam.mx}

\author[L.~F. López Ríos]{Luis Fernando López Ríos} 
\address{{\rm (L.~F. López Ríos)} Instituto de Investigaciones en Matem\'aticas Aplicadas y en Sistemas \\ Universidad Nacional Aut\'onoma de 
M\'exico\\Circuito Escolar s/n, Ciudad Universitaria, C.P. 04510, Cd. de M\'{e}xico (Mexico)}
\email{luis.lopez@mym.iimas.unam.mx}


\begin{abstract}
  In this article a theoretical framework for problems involving fractional equations of hyperbolic type arising in the theory of viscoelasticity is presented. Based on the Galerkin method, a variational problem of the fractionary viscoelasticity is studied. An appropriate functional setting is introduced in order to establish the existence, uniqueness and a priori estimates for weak solutions. This framework is developed
in close concordance with important physical quantities of the theory of
viscoelasticity.
\end{abstract}

\keywords{Fractional hyperbolic equations; viscoelastic media; fractional differential operators; Galerkin method}

\subjclass[2010]{35L20, 35R11, 74D05, 35A15}

\maketitle




\section{Introduction}

Viscoelastic materials combine properties of elastic solids and viscous fluids. Elastic materials return to their original configuration when the application of a force ends. However, the deformation of a viscous fluid increases over time when a force is applied. The mathematical modeling of viscoelastic materials is based on the continuum mechanics theory. For instance, for a linear viscoelastic material with a Kelvin-Voigt constitutive law, the elastic contribution of the stress is proportional to the strain; while the viscous part of the stress is proportional to the standard derivative of the strain with time. In \cite{banks_hu_kenz_2011}, a brief review of the viscoelasticity theory with detailed explanations and examples can be seen. In \cite{Yo_etal_2019}, in addition to analyzing various theoretical models, important practical applications are reviewed (structural systems, marine pipelines, aerospace industry, biomechanics and nanoresonators). These mathematical models are represented by systems of standard partial differential equations that have been extensively studied. For example, existence, uniqueness and stability of the solution of related problems appear in \cite{FaMo92}. 

However, in general, the mathematical models above are not accurate enough for more complex viscoelastic materials such as many polymers \cite{BaTo83-1} and biological tissues \cite{Ma97}. In \cite{BaTo83-1} fractional calculus were considered to construct stress-strain relationships for viscoelastic materials, while in \cite{Ma97} viscoelastic properties of human soft tissues were studied considering a Kelvin-Voigt model with a derivative of the fractional order of the deformation. In both of these models were found that fractional derivatives better approximate experiments than classical ones. This agreement with the experiments and the simplicity of the model by introducing very few empirical parameters, make the  application of fractional calculus to describe viscoelastic phenomena very attractive. In \cite{Ma2010} can be found many more examples of the use of fractional calculus in the theory of viscoelasticity.


\subsection{Main result}
\label{sec:main-results}

In 1983, Bagley and Torvik suggested to use fractional derivatives to construct stress-strain relationships for viscoelastic materials. In the early stage the use of fractional calculus in this context was based on phenomenological arguments \cite{BaTo83-1}, but then it was linked to the molecular theory for dilute polymer solutions developed by Rouse \cite{BaTo83-2}. This theoretical basis for the fractional constitutive relationships gave confidence in their use to describe accurately the mechanical properties of viscoelastic materials. One of these fractional stress-strain constitutive relationships is the Kelvin-Voigt type, associated to the following initial boundary problem of hyperbolic type:   
\begin{equation} \label{eq:main_1}
  \left\{
  \begin{aligned}
    &\rho(x) \frac{\partial^2 u}{\partial t^2} - \frac{\partial}{\partial x_i} \left[ B_{ij}(x) \frac{\partial}{\partial x_j} \prescript{C}{0}{D}_t^\alpha u \right] - \frac{\partial}{\partial x_i} \left[ A_{ij}(x) \frac{\partial u}{\partial x_j} \right] = f(x,t), \\
    &\hphantom{aaaaaaaaaaaaaaaaaaaaaaaaaaaaaaaaaaaaaaaa}(x,t) \in \Omega \times (0,T), \\
    &u(x,t) = 0, \quad (x,t) \in \partial \Omega \times (0,T), \\
    &u(x,0) = g(x), \quad \frac{\partial u}{\partial t}(x,0) = h(x), \quad x \in \Omega, 
  \end{aligned}
  \right.
\end{equation}
where the summation over the repeating indices is assumed. Here $\Omega \subset \R^n$ is the domain occupied by the viscoelastic material and $u$ the unknown displacement vector field. The material functions are: the mass density $\rho$, the elastic tensor $A_{jk}$ and the fractional viscosity tensor $B_{jk}$, with fraction $\alpha \in (0,1)$. The vector function $f$ describe an external force on the material, and the phenomenon is studied in the time interval $(0,T)$, with initial displacement an velocity fields $g$ and $h$, respectively. The operator $\prescript{C}{0}{D}_t^\alpha$ is the Caputo fractional derivative in time.

In this article, we develop a variational method to study the fractional Kelvin-Voigt model (\ref{eq:main_1}), providing a basis for a solid mathematical framework to study some important properties of the original model proposed by Bagley and Torvik. These properties includes the balance of the kinetic and elastic energies together with the energy dissipated by the fractional viscosity. This variational study is necessary to guarantee the convergence of the Galerkin method and to find error estimates in the Galerkin finite element approximation method to numerically solve (\ref{eq:main_1}). The main result is the following theorem, which establishes the existence, uniqueness and \emph{a priori} estimates for weak solutions to (\ref{eq:main_1}).
\begin{Theorem} \label{thm:main}
  Under hypothesis \ref{H1}-\ref{H3} in Section~\ref{sec:existence}, there exists a unique weak solution to (\ref{eq:main_1}) and a constant $C=C(\nu,\rho_0,\Omega,T,\alpha)$ such that
  \begin{multline} \label{eq:energy_est}
    \| u \|_{L^\infty(0,T;\bH_0^1(\Omega))} + \| u_t \|_{L^\infty(0,T;\bL^2(\Omega))} + \| u \|_{H_0^{\alpha/2}(0,T;\bH_0^1(\Omega))} \\
   \le \ C \left( \| f \|_{L^2(0,T;\bH^{-1}(\Omega))} + \| g \|_{\bH_0^1(\Omega)} + \| h \|_{\bL^2(\Omega)} \right).
  \end{multline}
\end{Theorem}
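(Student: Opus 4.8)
The plan is a spatial Galerkin approximation, uniform energy estimates, and a compactness argument for the limit; uniqueness will come out of the same estimate. First I would record the weak formulation obtained by multiplying \eqref{eq:main_1} by a test function $v\in\bH_0^1(\Omega)$ and integrating by parts in $x$: a weak solution is a function $u$ with $u\in L^\infty(0,T;\bH_0^1(\Omega))$, $u_t\in L^\infty(0,T;\bL^2(\Omega))$ and $u\in H_0^{\alpha/2}(0,T;\bH_0^1(\Omega))$ satisfying
\begin{equation*}
  \langle\rho\,u_{tt},v\rangle+b\bigl(\prescript{C}{0}{D}_t^\alpha u,v\bigr)+a(u,v)=\langle f,v\rangle\qquad\text{for all }v\in\bH_0^1(\Omega)\text{ and a.e. }t\in(0,T),
\end{equation*}
together with $u(0)=g$ and $u_t(0)=h$; here $a(w,v)=\int_\Omega A_{ij}\partial_{x_j}w\,\partial_{x_i}v\,dx$ and $b(w,v)=\int_\Omega B_{ij}\partial_{x_j}w\,\partial_{x_i}v\,dx$ are symmetric and, by \ref{H1}--\ref{H3}, coercive on $\bH_0^1(\Omega)$. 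I would then take the $\bL^2(\Omega)$-orthonormal sequence $\{w_k\}_{k\ge1}$ of eigenfunctions of the operator associated with the form $a$ (so it is $a$-orthogonal as well), put $V_m=\operatorname{span}\{w_1,\dots,w_m\}$, and look for the Galerkin approximation $u_m(t)=\sum_{k=1}^m d_{m,k}(t)\,w_k\in V_m$ solving the projected equations with $u_m(0)$ and $u_m'(0)$ the $\bL^2(\Omega)$-orthogonal projections of $g$ and $h$ onto $V_m$.

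In the basis $\{w_k\}$ the projected problem is a finite linear system of the form $\sum_j M_{kj}\,d_{m,j}''+\mu_k\,\prescript{C}{0}{D}_t^\alpha d_{m,k}+\lambda_k\,d_{m,k}=\langle f(\cdot,t),w_k\rangle$, $k=1,\dots,m$, where $M=(M_{kj})$ is the symmetric positive definite mass matrix induced by $\rho$. Since $\prescript{C}{0}{D}_t^\alpha d=I^{1-\alpha}d'$, introducing the velocities as new unknowns recasts this as a linear Volterra integro-differential system with weakly singular but integrable kernel; such a system has a unique absolutely continuous solution on all of $[0,T]$ by standard Volterra theory. (Equivalently, one may appeal directly to the existence theory for systems of Caputo fractional differential equations, the a priori bounds below ruling out any finite-time blow-up.)

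The core of the proof is the a priori estimate. Testing the Galerkin equations with $u_m'(t)$ and integrating over $(0,t)$ gives the energy identity
\begin{equation*}
  \tfrac12\|\sqrt{\rho}\,u_m'(t)\|_{\bL^2}^2+\tfrac12\,a\bigl(u_m(t),u_m(t)\bigr)+\int_0^t b\bigl(\prescript{C}{0}{D}_s^\alpha u_m,u_m'\bigr)\,ds=\tfrac12\|\sqrt{\rho}\,u_m'(0)\|_{\bL^2}^2+\tfrac12\,a\bigl(u_m(0),u_m(0)\bigr)+\int_0^t\langle f,u_m'\rangle\,ds,
\end{equation*}
where I used that $A$ and $\rho$ are time-independent. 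The decisive ingredient is the coercivity of the fractional viscous term: writing $\prescript{C}{0}{D}_t^\alpha u_m=\prescript{RL}{0}{D}_t^\alpha\bigl(u_m-u_m(0)\bigr)$ and using the positivity of the Riemann--Liouville fractional integral of order $1-\alpha$ (the factor $\cos(\alpha\pi/2)$ being positive for $\alpha\in(0,1)$), one establishes an inequality of the form
\begin{equation*}
  \int_0^t b\bigl(\prescript{C}{0}{D}_s^\alpha u_m,u_m'\bigr)\,ds\ \ge\ c\,\|u_m\|_{H_0^{\alpha/2}(0,t;\bH_0^1(\Omega))}^2-C\,\|g\|_{\bH_0^1(\Omega)}^2 .
\end{equation*}
Combining this with the coercivity of $a$, the stability of the projections ($\|u_m(0)\|_{\bH_0^1}\le\|g\|_{\bH_0^1}$, $\|u_m'(0)\|_{\bL^2}\le\|h\|_{\bL^2}$), a Cauchy--Schwarz/Young estimate of the forcing term $\int_0^t\langle f,u_m'\rangle$, and Grönwall's lemma, I would obtain the bound \eqref{eq:energy_est} for $u_m$ with a constant depending only on $\nu,\rho_0,\Omega,T,\alpha$ and independent of $m$.

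Finally, the uniform bounds yield a subsequence with $u_m\overset{*}{\rightharpoonup}u$ in $L^\infty(0,T;\bH_0^1(\Omega))$, $u_m'\overset{*}{\rightharpoonup}u_t$ in $L^\infty(0,T;\bL^2(\Omega))$ and $u_m\rightharpoonup u$ in $H_0^{\alpha/2}(0,T;\bH_0^1(\Omega))$; since $\prescript{C}{0}{D}_t^\alpha$ is a bounded linear map between the relevant Bochner--Sobolev spaces, $\prescript{C}{0}{D}_t^\alpha u_m\rightharpoonup\prescript{C}{0}{D}_t^\alpha u$, which lets me pass to the limit in the Galerkin equations tested against a fixed $w_k$ and then, by density, against every $v\in\bH_0^1(\Omega)$; the initial conditions are recovered in the usual way using $u\in C([0,T];\bL^2(\Omega))$ and $u_t\in C([0,T];\bH^{-1}(\Omega))$, and weak lower semicontinuity of the norms transfers \eqref{eq:energy_est} to $u$. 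Uniqueness then follows by applying the same energy estimate to the difference of two weak solutions, which solves the homogeneous problem with zero data. I expect the main obstacle to be the coercivity estimate for the fractional viscous bilinear form --- proving it with the correct dependence on $\alpha$, in particular controlling the boundary contributions generated by passing from the Caputo to the Riemann--Liouville derivative, and dovetailing it with the low-regularity forcing so that the $\bH^{-1}$ norm of $f$ is what appears on the right-hand side.
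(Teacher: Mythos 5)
Your overall architecture (Galerkin basis of eigenfunctions, energy estimate from testing with $u_m'$, weak compactness, lower semicontinuity) matches the paper's, but there are genuine gaps, two of them structural. The first is your notion of weak solution: you require the identity to hold pointwise in $t$ with the full-order term $b\bigl(\prescript{C}{0}{D}_t^\alpha u,v\bigr)$, which needs $\prescript{C}{0}{D}_t^\alpha u(t)\in\bH_0^1(\Omega)$ for a.e.\ $t$, i.e.\ the \emph{full} $\alpha$-order fractional derivative of $\nabla u$ must be an $L^2$-in-time function. But the energy estimate only controls the \emph{half}-order derivative $\prescript{}{0}{D}_t^{\alpha/2}\nabla(u-g)$ in $L^2$ (that is exactly what membership in $H_0^{\alpha/2}(0,T;\bH_0^1(\Omega))$ gives), and applying a further $\prescript{}{0}{D}_t^{\alpha/2}$ takes you out of $L^2$. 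For the same reason, the claim that $\prescript{C}{0}{D}_t^\alpha$ is ``a bounded linear map between the relevant Bochner--Sobolev spaces'' fails for the spaces in which you actually have uniform bounds, so you cannot pass to the limit in that term as written. The paper circumvents this by adopting the space--time formulation (\ref{eq:weak_sol}), in which the order-$\alpha$ derivative is split via (\ref{eq:frac_var_2}) into two derivatives of order $\alpha/2$, one landing on $u$ and its adjoint $\prescript{}{t}{D}_T^{\alpha/2}$ on the test function; likewise $u_{tt}$ and the condition $u_t(0)=h$ are encoded weakly through test functions vanishing at $t=T$, since $u_{tt}$ is not controlled in $L^2(0,T;\bH^{-1}(\Omega))$ either.

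The second structural gap is uniqueness: ``applying the same energy estimate to the difference of two weak solutions'' presupposes that $u_t$ is an admissible test function, but a weak solution only has $u_t\in L^\infty(0,T;\bL^2(\Omega))$, not $\bH_0^1(\Omega)$-valued, so the energy identity cannot be justified for the difference (this is already the classical obstruction for the wave equation). The paper instead substitutes $v(t)=\int_t^s u\,d\tau$ and shows the resulting fractional term is nonnegative via Lemma~\ref{lm:viscosity_energy}. Finally, your coercivity inequality for the viscous term is asserted rather than proved. It is in fact recoverable: writing $\prescript{C}{0}{D}_s^\alpha u_m=\prescript{}{0}{I}_s^{1-\alpha}u_m'$, diagonalizing $b$ as in Lemma~\ref{lm:viscosity_energy} and using (\ref{eq:frac_var_I})--(\ref{eq:energy_equiv_I}) bounds $\int_0^t b(\prescript{}{0}{I}_s^{1-\alpha}u_m',u_m')\,ds$ below by a positive multiple of $\|\prescript{}{0}{I}_s^{(1-\alpha)/2}\nabla u_m'\|_{L^2}^2$, which dominates the $H^{\alpha/2}$ seminorm of $u_m-g_m$ after one more bounded fractional integration. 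Note, however, that the paper does not extract the $H_0^{\alpha/2}$ bound from the $u_m'$ test at all: it keeps that test only for the $L^\infty$ bounds and the sign of the dissipation, and obtains the fractional bound from a \emph{separate} test with $u_m-g_m$ integrated over $(0,T)$, combined with (\ref{eq:energy_equiv_D}) and the fractional Poincar\'e inequality.
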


This article is organized as follows. In Section~2 the Riemann-Liouville and Caputo fractional derivatives are introduced, these operators are well documented and their main properties can be found in \cite{SaKiMa93}. In Section~3 we use the $L^2$ theory of the Fourier transform to construct appropriate time fractional spaces and their associate variational properties, which are fundamental in the rest of the article. In Section~4 the time fractional theory is used to construct space-time fractional spaces and derivatives, which provide the appropriate variational framework to solve (\ref{eq:main_1}). Finally, in Section~5 we implement a Galerkin method to prove Theorem~\ref{thm:main}. Additionally, in the Appendix several useful properties of fractional derivatives and spaces used throughout the article are collected.       


\section{Fractional derivatives}

In this section we introduce the Riemann-Liouville and Caputo fractional derivatives, two of the most used fractional derivatives to describe viscoelastic materials. We refer to  \cite{SaKiMa93} for more details about the properties of these operators.

Let us first recall the usual notation $C_c^\infty(J)$ to represent the set of infinitely differentiable \emph{complex valued} functions, compactly supported in the, possibly unbounded, interval $J$. Given $a \in [-\infty,\infty)$ and $b \in (-\infty,\infty]$, with $a<b$ and $\varphi \in C^\infty_c(\R)$, the \emph{Riemann-Liouville fractional integrals} of order $\alpha>0$ is given by
 \begin{gather}
   \prescript{}{a}{I}_t^\alpha \varphi(t) = \frac{1}{\Gamma(\alpha)} \int_a^t \frac{\varphi(s)}{(t-s)^{1-\alpha}} \, ds, \label{eq:frac_integral1}\\
      \prescript{}{t}{I}_b^\alpha \varphi(t) = \frac{1}{\Gamma(\alpha)} \int_t^b \frac{\varphi(s)}{(s-t)^{1-\alpha}} \, ds, \label{eq:frac_integral2}
    \end{gather}
    where $\Gamma$ is the Gamma function. These integrals are called the \emph{left and right-sided fractional integrals}, respectively, and generalize the well-known Cauchy formula when $\alpha$ is a natural number. To complement the definition, we write  $\prescript{}{a}{I}_t^0 = \prescript{}{t}{I}_b^0  := I$, where $I$ is the identity operator.

    The \emph{Riemann-Liouville fractional derivatives} of order $\alpha$, with $m-1 < \alpha \le  m$ for certain $m \in \N$, are defined by
    \begin{equation}
      \label{eq:frac_der}
      \prescript{}{a}{D}_t^\alpha \varphi(t) := \frac{d^m}{dt^m} \circ \prescript{}{a}{I}_t^{m-\alpha} \varphi(t), \quad \prescript{}{t}{D}_b^\alpha \varphi(t) := (-1)^m \frac{d^m}{dt^m} \circ \prescript{}{t}{I}_b^{m-\alpha} \varphi(t).
    \end{equation}   
When $\alpha = m$, an integer, these derivatives coincide, up to a sign, with the usual derivative of order $m$; otherwise for $\alpha$ a fraction,
\begin{equation*}
  \prescript{}{a}{D}_t^\alpha \varphi(t) = \frac{1}{\Gamma(m-\alpha)} \frac{d^m}{dt^m} \int_a^t \frac{\varphi(s)}{(t-s)^{\alpha+1-m}} \, ds,
\end{equation*}
with a similar expression for the right-sided fractional derivative. To complement the definitions, we assume $\prescript{}{a}{D}_t^0 = \prescript{}{t}{D}_b^0  := I$.

If we interchange the operators in (\ref{eq:frac_der}) we obtain the so-called \emph{Caputo fractional derivatives} of order $\alpha$:
\begin{equation} \label{eq:frac_der_Caputo}
  \prescript{C}{a}{D}_t^\alpha \varphi(t) := \prescript{}{a}{I}_t^{m-\alpha} \circ \frac{d^m \varphi}{dt^m}(t), \quad \prescript{C}{t}{D}_b^\alpha \varphi(t) := (-1)^m \prescript{}{t}{I}_b^{m-\alpha} \circ \frac{d^m \varphi}{dt^m}(t),
\end{equation}
which coincide, up to a sign, with the usual derivative of order $m$, when $\alpha = m$ an integer, and
\begin{equation*}
  \prescript{C}{a}{D}_t^\alpha \varphi(t) = \frac{1}{\Gamma(m-\alpha)} \int_a^t \frac{\varphi^{(m)}(s)}{(t-s)^{\alpha+1-m}} \, ds,
\end{equation*}
when $\alpha$ is a fraction; there is a similar expression for the right-sided fractional derivative. As previously, we define $\prescript{C}{a}{D}_t^0 = \prescript{C}{t}{D}_b^0  := I$.

\begin{Remark}\label{rm:frac_AC}
  Observe that the fractional operators previously considered are applied to smooth functions, and thereby all the integrals and derivatives in consideration are well defined. However the operators can be further taken to more general functions; for instance if $[a,b]$ is a \emph{bounded} interval and $u \in L^1(a,b)$ then its fractional integrals (\ref{eq:frac_integral1}) and (\ref{eq:frac_integral2}) are well defined. Something similar can be done for the fractional derivatives: if $\alpha \in [0,1)$ and  $u \in AC[a,b]$, i.e. \emph{absolutely continuous}, then its fractional derivatives (\ref{eq:frac_der}) are also well defined and exist almost everywhere, with
\begin{equation} \label{eq:frac_der_AC}
  \prescript{}{a}{D}_t^\alpha u (t) = \frac{1}{\Gamma(1-\alpha)} \left[ \frac{u(a)}{(t-a)^\alpha} + \int_a^t \frac{u'(s)}{(t-s)^\alpha} \, ds \right].
\end{equation}
This identity provides a useful relation between the Riemann-Liouville and Caputo fractional derivatives:
\begin{equation} \label{eq:Caputo_frac_der}
  \prescript{C}{a}{D}_t^\alpha u (t) = \prescript{}{a}{D}_t^\alpha \left[ u(t) - u(a) \right],
\end{equation}
with similar identities for right-sided fractional derivatives and general $\alpha \ge 0$. For more details, see Section~2.3 in \cite{SaKiMa93}. Observe also that (\ref{eq:frac_der_AC}) and (\ref{eq:Caputo_frac_der}) imply that
\begin{equation}\label{eq:frac_der_cons}
    \prescript{}{a}{D}_t^\alpha 1 = \frac{1}{\Gamma(1-\alpha)} \frac{1}{(t-a)^\alpha}, \quad \prescript{C}{a}{D}_t^\alpha 1 = 0.
  \end{equation}
\end{Remark}

\begin{Remark}
  Concerning (\ref{eq:frac_der_cons}), it is important to mention that Theorem~\ref{thm:main} is still true if we consider the Riemann-Liouville fractional derivative (instead of the Caputo) in (\ref{eq:main_1}), but some care has to be taken. If $g \equiv 0$ the results remain unchanged, as both operators coincide in this case. On the other hand, if $g \not\equiv 0$, the fractional differentiation order $\alpha$ has to be restricted to the range $(0,1/2)$, as the Riemann-Liouville fractional derivative of a time-independent function is not cero; indeed, as we implement a $L^2$ variational theory to solve (\ref{eq:main_1}), in order to keep all the expressions in this space necessarily the range of $\alpha$ has to be restricted. Something different happens with the Caputo fractional derivative, which is zero in time-independent functions, giving it an advantage over the Riemann-Liouville fractional operator for modeling physical problems.
\end{Remark}

In order to develop some variational framework to study (\ref{eq:main_1}), we need to extend the fractional derivatives above to more general functions. To this end, in the next section we will use the $L^2$ theory of the Fourier transform together with an important property of Riemann-Liouville fractional derivatives: Property~\ref{Prop:App-Fourier_deri}.


\section{Fractional spaces}

Motivated by Property~\ref{Prop:App-Fourier_deri}, we use the $L^2$ theory of the Fourier transform to extend the fractional Riemann-Liouville derivatives from $C_c^\infty(\R)$ to a special Hilbert space, suitable to implement a variational framework to study (\ref{eq:main_1}).

The following definition of fractional spaces by using the Fourier transform is well-known, see for instance Chapter 15 in \cite{Ta07}.
\begin{Definition}
  For every $\alpha \ge 0$ we define the fractional Sobolev space
  \begin{equation*}
   H^\alpha(\R) = \{ u \in L^2(\R): |\omega|^\alpha \hat u \in L^2(\R)  \}.
  \end{equation*}
\end{Definition}

The fractional space $H^\alpha(\R)$ is a complex Hilbert space with inner product
\begin{equation} \label{eq:inner_product}
  (u,v)_\alpha := (u,v) + (|\omega|^\alpha \hat{u}, |\omega|^\alpha \hat{v}),
\end{equation}
where $(u,v) := \int_{-\infty}^\infty u \overline{v}$ is the inner product in $L^2(\R)$, with the bar denoting the complex conjugate. This inner product generate the norm
\begin{equation} \label{eq:norm}
  \| u \|_\alpha := (\| u \|_{L^2(\R)}^2 + |u|_\alpha^2)^{1/2},
\end{equation}where $| \cdot |_\alpha$ is the norm
\begin{equation*}
  |u|_\alpha := \| |\omega|^\alpha \hat{u} \|_{L^2(\R)}.
\end{equation*}
It is important to remark that these fractional spaces generalize the classical Sobolev spaces $H^m(\R)$, for nonnegative integers $m$, in particular $H^0(\R) = L^2(\R)$.

The Riemann-Liouville fractional derivatives of order $\alpha$ can be extended from $C_c^\infty(\R)$, in (\ref{eq:frac_der}), to every function  $u \in H^\alpha(\R)$ in the following way:
\begin{equation}\label{eq:frac_der_fourier}
  \prescript{}{-\infty}{D}_t^\alpha u (t) := \F^{-1}((i \omega)^\alpha \hat u(\omega)) \quad \text{and} \quad    \prescript{}{t}{D}_\infty^\alpha u (t) := \F^{-1}((-i \omega)^\alpha \hat u(\omega)).
\end{equation}
As a consequence of the Plancherel theorem, we can write down the Sobolev norm of $H^\alpha(\R)$ in the more classical way
\begin{equation}\label{eq:norm_2}
\| u \|_\alpha =  (\| u \|_{L^2(\R)}^2 + \| \prescript{}{-\infty}{D}_t^\alpha u \|_{L^2(\R)}^2)^{1/2}.
\end{equation}
We note that $\| \cdot \|_\alpha$ can also be defined with the right-sided fractional derivative $\prescript{}{t}{D}_\infty^\alpha$, nonetheless both norms are equal.

It is desirable to translate properties from $C_c^\infty(\R)$ to $H^\alpha(\R)$, and the next result justifies this approach. The proof of this result can be found in Chapter 15 of \cite{Ta07}.

\begin{Property} \label{Prop:density}
  The space $C_c^\infty(\R)$ is dense in $H^\alpha(\R)$.
\end{Property}

The fractional space $H^\alpha(\R)$ is formed by functions defined in the whole real line. For functions defined in a fixed open interval, it is still desirable to keep important properties of smooth functions in that interval. Thereby, and motivated by the previous proposition, we consider the following definition, see \cite{ErRo06,Gr85}.

\begin{Definition} \label{Def:fractional_interval}
  Let $\alpha \ge 0$ and $(a,b)$ be an open interval of $\R$, where $a \in [-\infty,\infty)$ and $b \in (-\infty,\infty]$ with $a<b$, so the interval could be unbounded. We define the space $H_0^\alpha(a,b)$ as the closure of $C_c^\infty(a,b)$ under the norm $\| \cdot \|_\alpha$.
\end{Definition}

The fractional space $H_0^\alpha(a,b)$ is a Hilbert subspace of $L^2(a,b)$ with the inner product and norm inherit under the limit of elements in $C_c^\infty(a,b)$: if $u,v \in H_0^\alpha(a,b)$ and $\{ \varphi_j \}_{j=1}^\infty, \{ \psi_j \}_{j=1}^\infty \subset C_c^\infty(a,b)$ are Cauchy sequences with the norm $\| \cdot \|_\alpha$ such that $\varphi_j \to u$ and $\psi_j \to v$ in $L^2(a,b)$, then
\begin{equation*}
  (u,v)_\alpha := \lim_{j \to \infty} (\varphi_j,\psi_j)_\alpha, \quad   \| u \|_\alpha := \lim_{j \to \infty} \| \varphi_j \|_\alpha,
\end{equation*}
where $(\varphi_j,\psi_j)_\alpha$ and $\| \varphi_j \|_\alpha$ are defined in (\ref{eq:inner_product}) and (\ref{eq:norm}), respectively. This inner product and norm are well-defined, in the sense that they don't depend on the approximating sequence and generalize the corresponding notions in $C^\infty_c(a,b)$, as can be deduced from the dominated convergence theorem. Observe also that $H_0^0(a,b) = L^2(a,b)$ and, by Property~\ref{Prop:density}, $H^\alpha_0(\R) = H^\alpha(\R)$ for every $\alpha \ge 0$.

In this article we will focus mainly in fractional spaces of order between 0 and 1/2, this choice will be clear later. For this variation range, Definition~\ref{Def:fractional_interval} take a more clear alternative form, allowing to identify better the functions in the fractional space, as we show in the next theorem. In what follows, we denote by $\widetilde u$ the \emph{continuation of $u$ by zero} outside of $(a,b)$.   

\begin{Theorem}\label{thm:H-alpha}
  For every $\alpha \in (0,1)$ the following properties are equivalent:
  \begin{enumerate}[label=(\roman*)]
  \item $u \in H_0^{\alpha/2}(a,b)$;
  \item $\widetilde u \in H^{\alpha/2}(\R)$;
  \item there exists $U \in H^{\alpha/2}(\R)$ such that restricted to $(a,b)$ is equal to $u$.
  \end{enumerate}
\end{Theorem}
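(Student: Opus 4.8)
The plan is to establish the cycle (i)~$\Rightarrow$~(ii)~$\Rightarrow$~(iii)~$\Rightarrow$~(i) with $s:=\alpha/2$; note that the hypothesis $\alpha<1$ means $s<1/2$, and this strict bound will be decisive only for the last implication. The first two implications are soft. If $u\in H_0^{s}(a,b)$, choose $\varphi_j\in C_c^\infty(a,b)$, $\|\cdot\|_s$-Cauchy, with $\varphi_j\to u$ in $L^2(a,b)$; regarded as elements of $C_c^\infty(\R)\subset H^s(\R)$ they are Cauchy there, so $\varphi_j\to w$ in $H^s(\R)$, hence also in $L^2(\R)$; since each $\varphi_j$ vanishes outside $(a,b)$ so does $w$, and $w|_{(a,b)}=u$, i.e. $w=\widetilde u\in H^s(\R)$, giving (i)~$\Rightarrow$~(ii). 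Taking $U=\widetilde u$ gives (ii)~$\Rightarrow$~(iii) at once.

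For (iii)~$\Rightarrow$~(i), let $U\in H^s(\R)$ restrict to $u$ on $(a,b)$. I claim it suffices to prove: (a) the zero extension $\widetilde u=\chi_{(a,b)}U$ lies in $H^s(\R)$; and (b) $C_c^\infty(a,b)$ is $\|\cdot\|_s$-dense in $V:=\{v\in H^s(\R):\supp v\subseteq\overline{(a,b)}\}$. Indeed, from (a) and (b) we obtain $\varphi_j\in C_c^\infty(a,b)$ with $\varphi_j\to\widetilde u$ in $\|\cdot\|_s$; then $\{\varphi_j\}$ is $\|\cdot\|_s$-Cauchy and converges to $u$ in $L^2(a,b)$, which is precisely membership of $u$ in $H_0^s(a,b)$.

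To prove (a) I would pass to the equivalent Gagliardo seminorm $[v]_s^2:=\iint_{\R\times\R}|v(x)-v(y)|^2\,|x-y|^{-1-2s}\,dx\,dy\approx|v|_s^2$ (a standard fact; see the references cited for Definition~\ref{Def:fractional_interval}), and split the double integral for $\chi_{(a,b)}U$ according to whether $x,y$ both lie in $(a,b)$, both lie outside $(a,b)$, or are separated by an endpoint: the first two pieces are $\le[U]_s^2$, while the third is controlled by a constant times $\int_{(a,b)}|U(x)|^2\dist(x,\{a,b\})^{-2s}\,dx$, finite by the one-dimensional Hardy inequality $\int_0^\infty t^{-2s}|\phi(t)|^2\,dt\le C[\phi]_s^2$, which holds precisely because $s<1/2$. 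For (b), if $(a,b)=\R$ this is Property~\ref{Prop:density}; otherwise one first pushes the support of $v\in V$ into a compact subset of $(a,b)$ and then mollifies. When $(a,b)$ is bounded, set $v_\lambda(x):=v\big(c+(x-c)/\lambda\big)$ with $c$ the midpoint and $\lambda\in(0,1)$, so that $\supp v_\lambda$ is compactly contained in $(a,b)$ and $v_\lambda\to v$ in $H^s(\R)$ as $\lambda\uparrow1$ by continuity of dilations; when $(a,b)=(a,\infty)$ (and symmetrically $(-\infty,b)$) first translate, $v_\varepsilon(x):=v(x-\varepsilon)$, so $\supp v_\varepsilon\subseteq[a+\varepsilon,\infty)$ with $v_\varepsilon\to v$ in $H^s(\R)$, and then multiply by a smooth cutoff equal to $1$ on $[-R,R]$, using that smooth truncation converges in $H^s(\R)$. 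Once the support is a compact subset of $(a,b)$, convolving with a mollifier of small enough radius keeps us in $C_c^\infty(a,b)$ and converges in $H^s(\R)$; a diagonal argument then finishes (b).

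The main obstacle is Step (a): that extension by zero is bounded on $H^s(\R)$ --- equivalently, that $\chi_{(a,b)}$ is a pointwise multiplier of $H^s(\R)$ --- which is true only for $s<1/2$, failing at $s=1/2$ because the associated Hardy inequality does. This is exactly where the hypothesis $\alpha\in(0,1)$ enters, and it explains why the paper restricts throughout to fractional orders below $1/2$.
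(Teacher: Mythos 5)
Your proof is correct, and for the substantive implication it takes a genuinely different route from the paper's. The paper proves $(ii)\Rightarrow(i)$ (and says $(iii)\Rightarrow(ii)$ is similar) by approximating $\widetilde u$ with $\varphi_j\in C_c^\infty(\R)$, sharply truncating to $\varphi_j\chi_{(a+1/j,b-1/j)}$, and showing on the Fourier side via integration by parts that the truncation still lies in $H^{\alpha/2}(\R)$ because $\widehat{\widetilde\varphi_j}(\omega)=O(1/|\omega|)$ and $\alpha/2<1/2$ (estimate (\ref{eq:fourier_comp})), before mollifying; you instead prove $(iii)\Rightarrow(i)$ directly by showing that $\chi_{(a,b)}$ is a bounded multiplier of $H^{s}(\R)$ for $s<1/2$ via the Gagliardo seminorm and a fractional Hardy inequality, and that $C_c^\infty(a,b)$ is dense in $\{v\in H^{s}(\R):\supp v\subseteq\overline{(a,b)}\}$ by dilation/translation plus mollification. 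The paper's version stays entirely inside the Fourier framework it has already built and reuses (\ref{eq:fourier_comp}) in Lemma~\ref{lm:regularity}, though its dominated-convergence step for $\widetilde\varphi_j\to\widetilde u$ in $H^{\alpha/2}(\R)$ is stated rather tersely; your version imports two external facts (the equivalence of the Gagliardo and Fourier seminorms, and the Hardy inequality for $s<1/2$) but in exchange isolates the real mechanism quantitatively and makes transparent that the result is exactly the statement that zero-extension is bounded on $H^{s}$ iff $s<1/2$. Two cosmetic points: in your Step (a) the weighted integral $\int_{(a,b)}|U|^2\dist(x,\{a,b\})^{-2s}\,dx$ is only singular near the endpoints, so away from them you should also invoke $\|U\|_{L^2}$ (i.e.\ control by the full $H^{s}$ norm rather than the seminorm alone); and your first implication should note explicitly that the norm $\|\cdot\|_{s}$ of Definition~\ref{Def:fractional_interval} on $C_c^\infty(a,b)$ coincides with the $H^{s}(\R)$ norm, which is what lets you pass to the limit $w$ in the complete space $H^{s}(\R)$ --- both are immediate, and neither affects the validity of the argument.
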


\begin{proof}
  It is a simple matter to check that $(i) \Rightarrow (ii) \Rightarrow (iii)$. Let us prove $(ii) \Rightarrow (i)$, the proof of $(iii) \Rightarrow (ii)$ is similar.

  Suppose that $\widetilde{u} \in H^{\alpha/2}(\R)$. Then by Property~\ref{Prop:density}, there exists a sequence $\{ \varphi_j \}_{j=1}^\infty \subset C_c^\infty(\R)$ such that $\varphi_j \to \widetilde u$ in $H^{\alpha/2}(\R)$. Let us now use two standard approximation methods: cutting-off and regularization.
  
\noindent  \emph{Cutting-off}: consider $\widetilde \varphi_j := \varphi_j \chi_{(a+1/j,b-1/j)}$, where $\chi$ is the characteristic function and $j$ is big enough. We claim that $\widetilde \varphi_j \in H^{\alpha/2}(\R)$. Indeed, by using integration by parts we deduce that
  \begin{equation} \label{eq:fourier_comp}
    \begin{aligned}
      \widehat{\widetilde{\varphi}_j}(\omega) &= \frac{i}{\omega} \left( \widetilde{\varphi}_j(b)e^{-b\omega i} - \widetilde{\varphi}_j(a)e^{-a\omega i} - \int_a^b \varphi_j'(t)e^{-i\omega t}\, dt   \right) \\
      & = O\left( \frac{1}{|\omega|}  \right), \quad \text{as } |\omega| \to \infty.
    \end{aligned}
  \end{equation}
This estimate and the fact that $\alpha \in (0,1)$ imply $|\omega|^{\alpha/2} \widehat{\widetilde{\varphi}_j} \in L^2(\R)$, as we wished. Moreover, as $\varphi_j \to \widetilde u$ in $H^{\alpha/2}(\R)$, we can use the dominated convergence theorem to find that
  \begin{equation}\label{eq:cut-off}
    \widetilde \varphi_j \to \widetilde u \quad \text{in } H^{\alpha/2}(\R).
  \end{equation}

\noindent  \emph{Regularization}: let us now consider $\widetilde \varphi_j^* := \widetilde \varphi_j * \eta_{1/3j}$, where $\eta_j(t) = j\eta(jt)$ is a regularization sequence:
  \begin{equation*}
    \eta \in C^\infty_c (-1,1) \quad \text{with} \quad \int_{-\infty}^\infty \eta =1.
  \end{equation*}
 By well-known properties of the convolution, for every $j$, big enough, $\{ \widetilde \varphi_j^* \}_j \subset C_c^\infty(a,b)$; moreover, by (\ref{eq:cut-off}) it is a Cauchy sequence with the norm $\| \cdot \|_{\alpha/2}$ and 
  \begin{equation*}
    \widetilde \varphi_j^* \to u \quad \text{in } L^2(a,b).
  \end{equation*}
Thereby, we conclude that $u \in H_0^{\alpha/2}(a,b)$, and the proof is complete.
\end{proof}

As a consequence of previous theorem we deduce the following lemma, which states that the fractional space $H_0^{\alpha/2}(a,b)$ contains the classical Sobolev space of integer order:
\begin{equation*}
  H^1(a,b) := \{ u \in L^2(a,b): u' \in L^2(a,b) \}.
\end{equation*}

\begin{Lemma} \label{lm:regularity}
  For every $\alpha \in (0,1)$ we have $H^1(a,b) \subset H_0^{\alpha/2}(a,b)$.
\end{Lemma}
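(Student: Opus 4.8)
The plan is to reduce the inclusion to a statement about the zero extension and then estimate a Fourier transform directly. By Theorem~\ref{thm:H-alpha}, for $u\in L^2(a,b)$ one has $u\in H_0^{\alpha/2}(a,b)$ if and only if $\widetilde u\in H^{\alpha/2}(\R)$; hence it suffices to show that $\widetilde u\in H^{\alpha/2}(\R)$ whenever $u\in H^1(a,b)$. Since $u\in L^2(a,b)$ forces $\widetilde u\in L^2(\R)$, the only thing left to verify is $|\widetilde u|_{\alpha/2}<\infty$, i.e. $|\omega|^{\alpha/2}\,\widehat{\widetilde u}\in L^2(\R)$.

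I would first treat a \emph{bounded} interval $(a,b)$. Then $H^1(a,b)$ embeds into $AC[a,b]$, so integration by parts is licit and, as for (\ref{eq:fourier_comp}),
\begin{equation*}
  \widehat{\widetilde u}(\omega)=\frac{i}{\omega}\Big(u(b)e^{-b\omega i}-u(a)e^{-a\omega i}-\int_a^b u'(t)e^{-i\omega t}\,dt\Big)=O\!\left(\frac{1}{|\omega|}\right),\qquad |\omega|\to\infty,
\end{equation*}
with $|\widehat{\widetilde u}(\omega)|\le |\omega|^{-1}\big(|u(a)|+|u(b)|+\|u'\|_{L^1(a,b)}\big)$ and $\|u'\|_{L^1(a,b)}\le (b-a)^{1/2}\|u'\|_{L^2(a,b)}$. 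Splitting $\R$ into $\{|\omega|\le 1\}$ and $\{|\omega|>1\}$: on the first set $\widehat{\widetilde u}$ is bounded (as $\widetilde u\in L^1(\R)$) while $|\omega|^{\alpha}$ is integrable there, and on the second set $|\omega|^{\alpha}|\widehat{\widetilde u}(\omega)|^2\lesssim|\omega|^{\alpha-2}$, which is integrable precisely because $\alpha<1$. Hence $|\widetilde u|_{\alpha/2}<\infty$ and $\widetilde u\in H^{\alpha/2}(\R)$.

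For an \emph{unbounded} interval I would localize. If $a=-\infty$ and $b=+\infty$ there is nothing to prove, since $\widetilde u=u$ and $|\omega|^{\alpha/2}\le 1+|\omega|$ give $H^1(\R)\subset H^{\alpha/2}(\R)$. Otherwise choose $\zeta\in C^\infty(\R)$, with $\zeta$ and $\zeta'$ bounded, equal to $1$ near the finite endpoint and with support bounded towards the infinite end, and write $u=\zeta u+(1-\zeta)u$. The function $(1-\zeta)u$ vanishes near the finite endpoint, so its zero extension lies in $H^1(\R)\subset H^{\alpha/2}(\R)$, while $\zeta u\in H^1$ of a bounded subinterval and its zero extension is covered by the previous paragraph; since zero extension is linear, $\widetilde u=\widetilde{\zeta u}+\widetilde{(1-\zeta)u}\in H^{\alpha/2}(\R)$, and Theorem~\ref{thm:H-alpha} finishes the argument.

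The one essential point is the decay rate of $\widehat{\widetilde u}$: extending by zero generically introduces jump discontinuities at finite endpoints, which limit the Fourier decay to $O(1/|\omega|)$; the assumption $\alpha<1$ (equivalently $\alpha/2<1/2$, below the trace threshold) is exactly what keeps $|\omega|^{\alpha/2}\widehat{\widetilde u}$ square-integrable at infinity, and is the reason the inclusion cannot hold for $\alpha\ge 1$ when $u$ does not vanish at the boundary. The cutoff bookkeeping for unbounded intervals is routine, so I expect no real obstacle beyond this observation.
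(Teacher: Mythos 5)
Your argument is correct and follows essentially the same route as the paper: reduce via Theorem~\ref{thm:H-alpha} to showing $\widetilde u\in H^{\alpha/2}(\R)$, then reuse the integration-by-parts estimate (\ref{eq:fourier_comp}) to get $\widehat{\widetilde u}(\omega)=O(1/|\omega|)$ and conclude square-integrability of $|\omega|^{\alpha/2}\widehat{\widetilde u}$ from $\alpha<1$. You simply spell out the low-/high-frequency splitting and add the cutoff treatment of unbounded intervals, details the paper leaves implicit.
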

\begin{proof}
  Suppose $u \in H^1(a,b)$. By following the previous theorem, let us prove that $\widetilde u \in H^{\alpha/2}(\R)$. Indeed, note that estimation (\ref{eq:fourier_comp}), of the proof of the theorem, is still true for $\widetilde u$; and therefore $|\omega|^{\alpha/2}\widehat{\widetilde u} \in L^2(\R)$, as we wished. 
\end{proof}

\begin{Remark}\label{rm:H-alpha}
  \begin{enumerate}[label=(\roman*)]
  \item Theorem~\ref{thm:H-alpha} guarantees the equivalence of the three main methods to define fractional spaces in an \emph{interval}, see Chapter~1 in \cite{Gr85}.
  \item Lemma~\ref{lm:regularity} implies, in particular, that in any \emph{bounded interval} every $u \in C^1[a,b]$ belongs to the space $u \in H_0^{\alpha/2}(a,b)$. Thereby the functions in $H_0^{\alpha/2}(a,b)$, $\alpha \in (0,1)$, \emph{leave no trace} on the interval endpoints, so they are not necessarily equal to zero there; see Chapter 16 in \cite{Ta07} for more details.
  \end{enumerate}
\end{Remark}

\begin{Definition}
If $\alpha \in (0,1)$ and $u \in H_0^{\alpha/2}(a,b)$, we define the fractional Riemann-Liouville derivatives as
\begin{equation} \label{eq:frac_der_limit}
  \prescript{}{a}{D}_t^{\alpha/2} u := \lim_{j \to \infty} \prescript{}{-\infty}{D}_t^{\alpha/2} \varphi_j, \quad \prescript{}{t}{D}_b^{\alpha/2} u := \lim_{j \to \infty} \prescript{}{t}{D}_\infty^{\alpha/2} \varphi_j,
\end{equation}
where $\{\varphi_j\}_{j=1}^\infty \subset C_c^\infty(a,b)$ is a Cauchy sequence with the norm $\| \cdot \|_{\alpha/2}$ such that $\varphi_j \to u$ in $L^2(a,b)$  
\end{Definition}
The limits in (\ref{eq:frac_der_limit}) are taken in the $L^2(\R)$ sense and thereby both fractional derivatives belong to this space. By the dominated convergence theorem these fractional operators are well-defined in the sense that they don't depend on the approximating sequence.

In the next lemma we show that the fractional operator given by limit processes in (\ref{eq:frac_der_limit}) generalizes the classical definition of the fractional derivatives in (\ref{eq:frac_der}), for functions in $C^1[a,b]$, in a \emph{bounded} interval $(a,b)$; recall that by Remark~\ref{rm:frac_AC}, this last operator is well defined in this space. 

\begin{Lemma}\label{lm:frac_der}
  Let $\alpha \in (0,1)$ and $(a,b)$ a bounded interval. If $u \in C^1[a,b]$ then $u \in H_0^{\alpha/2}(a,b)$ and both fractional Riemann-Liouville derivatives in (\ref{eq:frac_der}) and (\ref{eq:frac_der_limit}) agree.
\end{Lemma}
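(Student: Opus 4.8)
The plan is to verify the two assertions in turn. That $u \in H_0^{\alpha/2}(a,b)$ is immediate: on the bounded interval $(a,b)$ a function $u \in C^1[a,b]$ has $u,u' \in L^2(a,b)$, hence $u \in H^1(a,b)$, and Lemma~\ref{lm:regularity} applies; moreover, as in the proof of that lemma, the decay estimate (\ref{eq:fourier_comp}) holds for $\widetilde u$, so $\widetilde u \in H^{\alpha/2}(\R)$. For the agreement of derivatives I would first pin down the object defined by the limit (\ref{eq:frac_der_limit}). Pick any $\|\cdot\|_{\alpha/2}$-Cauchy sequence $\{\varphi_j\} \subset C_c^\infty(a,b)$ with $\varphi_j \to u$ in $L^2(a,b)$. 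Since the $\varphi_j$ vanish outside $(a,b)$ they converge to $\widetilde u$ in $L^2(\R)$, and being $\|\cdot\|_{\alpha/2}$-Cauchy in the complete space $H^{\alpha/2}(\R)$ they in fact converge to $\widetilde u$ there. Because $\prescript{}{-\infty}{D}_t^{\alpha/2}\varphi_j = \F^{-1}((i\omega)^{\alpha/2}\widehat{\varphi_j})$ by (\ref{eq:frac_der_fourier}) and $|(i\omega)^{\alpha/2}| = |\omega|^{\alpha/2}$, Plancherel gives
\[
 \bigl\| \prescript{}{-\infty}{D}_t^{\alpha/2}\varphi_j - \F^{-1}\bigl((i\omega)^{\alpha/2}\widehat{\widetilde u}\bigr) \bigr\|_{L^2(\R)} = \bigl\| |\omega|^{\alpha/2}(\widehat{\varphi_j}-\widehat{\widetilde u}) \bigr\|_{L^2(\R)} \longrightarrow 0 ,
\]
so the function defined by (\ref{eq:frac_der_limit}) is $\F^{-1}((i\omega)^{\alpha/2}\widehat{\widetilde u}) \in L^2(\R)$, and it remains to identify its restriction to $(a,b)$ with the classical derivative (\ref{eq:frac_der}).

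For this core step I would factor the Fourier multiplier as $(i\omega)^{\alpha/2} = (i\omega)^{-(1-\alpha/2)}\,(i\omega)$ for $\omega \neq 0$. Since $\widetilde u$ is bounded and compactly supported, its distributional derivative is the finite measure $(\widetilde u)' = \widetilde{u'} + u(a)\,\delta_a - u(b)\,\delta_b$ (with $\widetilde{u'}$ the zero-extension of $u'$), so that $(i\omega)\widehat{\widetilde u} = \widehat{(\widetilde u)'}$ and the two $L^2(\R)$ functions $(i\omega)^{\alpha/2}\widehat{\widetilde u}$ and $(i\omega)^{-(1-\alpha/2)}\widehat{(\widetilde u)'}$ agree off the null set $\{0\}$. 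The multiplier $(i\omega)^{-(1-\alpha/2)}$ is the Fourier transform of the Riemann--Liouville kernel $k(t):=t_+^{-\alpha/2}/\Gamma(1-\alpha/2)$ — the same multiplier identity behind (\ref{eq:frac_der_fourier}) (cf. Property~\ref{Prop:App-Fourier_deri}) and behind the integrals in (\ref{eq:frac_integral1}) — so, $k$ being a tempered distribution and $(\widetilde u)'$ compactly supported, one gets
\[
 \F^{-1}\bigl((i\omega)^{\alpha/2}\widehat{\widetilde u}\bigr) = k * (\widetilde u)' = k * \widetilde{u'} + u(a)\,k(\cdot - a) - u(b)\,k(\cdot - b).
\]
Restricting to $t \in (a,b)$: the last term vanishes because $t-b<0$, the second is $(t-a)^{-\alpha/2}/\Gamma(1-\alpha/2)$, and $(k*\widetilde{u'})(t)=\frac{1}{\Gamma(1-\alpha/2)}\int_a^t (t-s)^{-\alpha/2}u'(s)\,ds$; the sum is precisely the right-hand side of (\ref{eq:frac_der_AC}) for the order $\alpha/2$, which by Remark~\ref{rm:frac_AC} (applicable since $C^1[a,b]\subset AC[a,b]$) is the classical Riemann--Liouville derivative $\prescript{}{a}{D}_t^{\alpha/2}u$ of (\ref{eq:frac_der}). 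This gives the claimed agreement.

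The main obstacle is the bookkeeping in this core computation at the level of tempered distributions: justifying that $(i\omega)^{-(1-\alpha/2)}$ is the Fourier transform of $k$ with the correct branch of the complex power, that ``multiplier times transform equals transform of convolution'' is licit when one factor carries the endpoint Dirac masses $\delta_a,\delta_b$, and keeping track of the cancellation $\widehat{(\widetilde u)'}(0)=\bigl(u(b)-u(a)\bigr)+u(a)-u(b)=0$, which is exactly what keeps $(i\omega)^{-(1-\alpha/2)}\widehat{(\widetilde u)'}$ square-integrable near the origin. As a distribution-free cross-check I would instead argue by duality: for $\psi \in C_c^\infty(a,b)$ the fractional integration-by-parts identity gives $\int_a^b (\prescript{}{-\infty}{D}_t^{\alpha/2}\varphi_j)\,\psi = \int_a^b \varphi_j\,(\prescript{}{t}{D}_b^{\alpha/2}\psi)$ (on $(a,b)$ the left integrand is the classical $\prescript{}{a}{D}_t^{\alpha/2}\varphi_j$, since $\varphi_j$ vanishes to the left of $a$); letting $j\to\infty$ and using the previous paragraph, $\int_a^b \F^{-1}((i\omega)^{\alpha/2}\widehat{\widetilde u})\,\psi = \int_a^b u\,(\prescript{}{t}{D}_b^{\alpha/2}\psi)$, while the same identity applied directly to $u\in C^1[a,b]$ — the endpoint terms dropping out because $\psi$ is compactly supported in $(a,b)$ — shows the classical derivative satisfies the identical weak relation; density of $C_c^\infty(a,b)$ in $L^2(a,b)$ then forces the two $L^2(a,b)$ functions to coincide almost everywhere.
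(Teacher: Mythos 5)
Your proof is correct, and it reaches the same pivot as the paper's: both arguments reduce everything to the single identity that the classical Riemann--Liouville derivative of the zero-extension $\widetilde u$ equals $\F^{-1}((i\omega)^{\alpha/2}\widehat{\widetilde u})$, after which the limit in (\ref{eq:frac_der_limit}) is identified with that same expression by the Plancherel/approximation argument you spell out in your second paragraph. The difference lies in how the pivot identity is obtained. The paper asserts that Property~\ref{Prop:App-Fourier_deri}~$(i)$ remains valid for piecewise differentiable compactly supported functions, cites Section~7.1 of \cite{SaKiMa93}, and leaves the details to the reader. You instead prove it: factoring the multiplier as $(i\omega)^{-(1-\alpha/2)}(i\omega)$, computing the distributional derivative $(\widetilde u)'=\widetilde{u'}+u(a)\,\delta_a-u(b)\,\delta_b$, and convolving with the Riemann--Liouville kernel recovers exactly formula (\ref{eq:frac_der_AC}) of order $\alpha/2$ on $(a,b)$, including the singular term $u(a)(t-a)^{-\alpha/2}/\Gamma(1-\alpha/2)$, whose square-integrability is precisely where the restriction $\alpha/2<1/2$ enters; your remark that $\widehat{(\widetilde u)'}(0)=0$ controls the multiplier near the origin is the right piece of bookkeeping. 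The duality argument in your final paragraph is an independently sufficient and essentially distribution-free way to close the identification (classical fractional integration by parts for $u\in C^1[a,b]$ against $\psi\in C_c^\infty(a,b)$, a limit passage, and density of $C_c^\infty(a,b)$ in $L^2(a,b)$), and is arguably the cleanest route. Either version makes the lemma self-contained where the paper's proof defers to the literature.
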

\begin{proof}
  The key observation is that Property~\ref{Prop:App-Fourier_deri} $(i)$ is still true for \emph{piecewise} differentiable functions with \emph{compact support}, we leave the details to the reader and refer to Section~7.1 in \cite{SaKiMa93}. Thereby if we apply this property to the piecewise differentiable function with compact support $\widetilde{u}$, we have
\begin{equation*}
  \prescript{}{-\infty}{D}_t^{\alpha/2} \widetilde{u}(\omega) = \F^{-1}((i \omega)^{\alpha/2} \widehat{\widetilde{u}}(\omega)),
\end{equation*}
where the fractional derivative here is the classical one, in (\ref{eq:frac_der}). On the other hand,  Theorem~\ref{thm:H-alpha} implies that $\widetilde{u} \in H^{\alpha/2}(\R)$ (see also Remark~\ref{rm:H-alpha} $(ii)$) and by (\ref{eq:frac_der_fourier}) we have
\begin{equation*}
  \prescript{}{-\infty}{D}_t^{\alpha/2} \widetilde{u}(\omega) = \F^{-1}((i \omega)^{\alpha/2} \widehat{\widetilde{u}}(\omega)),
\end{equation*}
where the fractional derivative here is the one in (\ref{eq:frac_der_fourier}). We conclude that these fractional derivatives are equal. Moreover, by similar arguments to those in the proof of Theorem~\ref{thm:H-alpha}, the fractional derivative in the last equation coincide with the one by the limit in (\ref{eq:frac_der_limit}), and the proof is complete.
\end{proof}
From the proof of Theorem~\ref{thm:H-alpha}, we observe that the sequence $\{ \widetilde \varphi_j^* \}_{j=1}^\infty \subset C_c^\infty(a,b)$ is such that 
    \begin{equation*}
      \prescript{}{a}{D}_t^{\alpha/2} \widetilde \varphi_j^* \to \prescript{}{a}{D}_t^{\alpha/2} u, \quad  \prescript{}{t}{D}_b^{\alpha/2} \widetilde \varphi_j^* \to \prescript{}{t}{D}_b^{\alpha/2} u, \quad \text{in } L^2(\R).
    \end{equation*}
    These approximations are needed in order to prove energy estimates in Theorem~\ref{thm:energy_equiv}.

We provide next some fractional variational formulae that will be a fundamental ingredient to develop the Galerkin method in Section~\ref{sec:existence}. 

\begin{Theorem}
  Let $\alpha \in (0,1)$ and $(a,b)$ a bounded interval.
  \begin{enumerate}[label=(\roman*)]
  \item If $u,v \in H_0^{\alpha/2}(a,b)$ then
    \begin{equation} \label{eq:frac_var_1}
      \int_a^b \prescript{}{a}{D}_t^{\alpha/2} u v \, dt = \int_a^b u \prescript{}{t}{D}_b^{\alpha/2}v \, dt;
    \end{equation}
  \item If $u \in C^2[a,b]$ and $v \in H_0^{\alpha/2}(a,b)$ then
    \begin{equation}\label{eq:frac_var_2}
      \int_a^b \prescript{C}{a}{D}_t^\alpha u v \, dt = \int_a^b \prescript{C}{a}{D}_t^{\alpha/2} u \prescript{}{t}{D}_b^{\alpha/2} v \, dt.
    \end{equation}
  \item If $u,v \in L^2(a,b)$ then
    \begin{gather}
      \int_a^b \prescript{}{a}{I}_t^{\alpha/2} u v \, dt = \int_a^b u \prescript{}{t}{I}_b^{\alpha/2}v \, dt, \\
      \int_a^b \prescript{}{a}{I}_t^\alpha u v \, dt = \int_a^b \prescript{}{a}{I}_t^{\alpha/2} u \prescript{}{t}{I}_b^{\alpha/2} v \, dt. \label{eq:frac_var_I}
    \end{gather}
  \end{enumerate}
\end{Theorem}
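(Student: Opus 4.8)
The plan is to prove the three identities in the order (iii), (i), (ii), each one feeding into the next. Throughout I will use that on a bounded interval the fractional integral $\prescript{}{a}{I}_t^{\beta}$ is bounded on $L^2(a,b)$ for $\beta>0$ (convolution with the locally integrable kernel $t^{\beta-1}/\Gamma(\beta)$, via Young's inequality), that the integral semigroup law $\prescript{}{a}{I}_t^{\mu}\circ\prescript{}{a}{I}_t^{\nu}=\prescript{}{a}{I}_t^{\mu+\nu}$ holds for $\mu,\nu>0$ on $L^1(a,b)$, and that for smooth compactly supported functions all the notions of fractional derivative introduced so far coincide (Property~\ref{Prop:App-Fourier_deri} together with (\ref{eq:frac_der_limit})).

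For (iii), the first identity is just Fubini: writing out $\prescript{}{a}{I}_t^{\alpha/2}u(t)=\tfrac{1}{\Gamma(\alpha/2)}\int_a^t(t-s)^{\alpha/2-1}u(s)\,ds$, the double integral $\int_a^b\!\int_a^t(t-s)^{\alpha/2-1}|u(s)||v(t)|\,ds\,dt$ is finite by Cauchy--Schwarz and the $L^2$-boundedness of $\prescript{}{a}{I}_t^{\alpha/2}$, so the order of integration may be swapped and the inner region relabelled to obtain $\int_a^b u\,\prescript{}{t}{I}_b^{\alpha/2}v\,dt$. The second identity of (iii) then follows from $\prescript{}{a}{I}_t^{\alpha}=\prescript{}{a}{I}_t^{\alpha/2}\circ\prescript{}{a}{I}_t^{\alpha/2}$ and the first identity applied to the pair $(\prescript{}{a}{I}_t^{\alpha/2}u,\,v)$.

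For (i), I would first establish the identity for $\varphi,\psi\in C_c^\infty(a,b)$. Since $\varphi(a)=\varphi(b)=0$, formula (\ref{eq:frac_der_AC}) gives $\prescript{}{a}{D}_t^{\alpha/2}\varphi=\prescript{}{a}{I}_t^{1-\alpha/2}\varphi'$, hence by the first formula of (iii), $\int_a^b\prescript{}{a}{D}_t^{\alpha/2}\varphi\,\psi\,dt=\int_a^b\varphi'\,\prescript{}{t}{I}_b^{1-\alpha/2}\psi\,dt$; integrating by parts (the boundary terms vanish because $\varphi$ is compactly supported in $(a,b)$, and $\prescript{}{t}{I}_b^{1-\alpha/2}\psi\in C^1[a,b]$) and using $\tfrac{d}{dt}\prescript{}{t}{I}_b^{1-\alpha/2}\psi=-\prescript{}{t}{D}_b^{\alpha/2}\psi$ from (\ref{eq:frac_der}) yields $\int_a^b\varphi\,\prescript{}{t}{D}_b^{\alpha/2}\psi\,dt$. (Alternatively, one can pass to Fourier variables and use the Plancherel identity together with the branch relation $(-i(-\omega))^{\alpha/2}=(i\omega)^{\alpha/2}$.) To reach general $u,v\in H_0^{\alpha/2}(a,b)$, take $\varphi_j\to u$ and $\psi_j\to v$ in $\|\cdot\|_{\alpha/2}$; then by (\ref{eq:frac_der_limit}) one has $\prescript{}{a}{D}_t^{\alpha/2}\varphi_j\to\prescript{}{a}{D}_t^{\alpha/2}u$ and $\prescript{}{t}{D}_b^{\alpha/2}\psi_j\to\prescript{}{t}{D}_b^{\alpha/2}v$ in $L^2$, while $\varphi_j\to u$ and $\psi_j\to v$ in $L^2$, so both sides of the identity pass to the limit.

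For (ii), the algebraic heart is the composition identity $\prescript{C}{a}{D}_t^{\alpha}u=\prescript{}{a}{D}_t^{\alpha/2}\bigl(\prescript{C}{a}{D}_t^{\alpha/2}u\bigr)$: indeed $\prescript{C}{a}{D}_t^{\alpha/2}u=\prescript{}{a}{I}_t^{1-\alpha/2}u'$, and since $\prescript{}{a}{I}_t^{1-\alpha/2}=\prescript{}{a}{I}_t^{\alpha/2}\circ\prescript{}{a}{I}_t^{1-\alpha}$ and $\prescript{}{a}{D}_t^{\alpha/2}\circ\prescript{}{a}{I}_t^{\alpha/2}=\mathrm{Id}$, applying $\prescript{}{a}{D}_t^{\alpha/2}$ leaves $\prescript{}{a}{I}_t^{1-\alpha}u'=\prescript{C}{a}{D}_t^{\alpha}u$. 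Next I would check that $g:=\prescript{C}{a}{D}_t^{\alpha/2}u=\prescript{}{a}{I}_t^{1-\alpha/2}u'$ lies in $H^1(a,b)$: by (\ref{eq:frac_der_AC}), $g'=\prescript{}{a}{D}_t^{\alpha/2}u'=\tfrac{u'(a)}{\Gamma(1-\alpha/2)}(t-a)^{-\alpha/2}+\prescript{}{a}{I}_t^{1-\alpha/2}u''\in L^2(a,b)$ because $\alpha<1$ and $u\in C^2[a,b]$; hence $g\in H_0^{\alpha/2}(a,b)$ by Lemma~\ref{lm:regularity}, with $g(a)=0$. As in Lemma~\ref{lm:frac_der} (the zero extension $\widetilde g$ is piecewise $C^1$ with compact support), the classical Riemann--Liouville derivative of $g$ coincides with the one defined by (\ref{eq:frac_der_limit}). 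Applying part (i) to the pair $(g,v)$ and using the composition identity then gives $\int_a^b\prescript{C}{a}{D}_t^{\alpha}u\,v\,dt=\int_a^b\prescript{}{a}{D}_t^{\alpha/2}g\,v\,dt=\int_a^b g\,\prescript{}{t}{D}_b^{\alpha/2}v\,dt$, which is (\ref{eq:frac_var_2}) since $g=\prescript{C}{a}{D}_t^{\alpha/2}u$.

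I expect the main obstacle to be the bookkeeping in (ii): reconciling the classical Riemann--Liouville derivative on $(a,b)$ with the Fourier/limit extension used in (i) for the function $g=\prescript{C}{a}{D}_t^{\alpha/2}u$, which is merely Hölder regular near $t=a$ and whose derivative carries an integrable singularity there, and simultaneously justifying the semigroup and left-inverse identities for the fractional operators, which are valid precisely because the functions in play vanish or stay bounded at the left endpoint. Everything else reduces to Fubini's theorem, integration by parts, and a density argument.
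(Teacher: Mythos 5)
Your proposal is correct and shares the paper's overall architecture---establish the transposition identities for smooth compactly supported functions, extend to $H_0^{\alpha/2}(a,b)$ by density, and reduce (ii) to (i) via the factorization $\prescript{C}{a}{D}_t^{\alpha}u=\prescript{}{a}{D}_t^{\alpha/2}\bigl(\prescript{C}{a}{D}_t^{\alpha/2}u\bigr)$ together with the check that $\prescript{C}{a}{D}_t^{\alpha/2}u=\prescript{}{a}{I}_t^{1-\alpha/2}u'$ lies in $H^1(a,b)$---but you handle the base cases by genuinely different means. The paper proves the smooth-function case of (i) and the identities in (iii) entirely on the Fourier side (Property~\ref{Prop:App-Fourier_deri} plus the multiplication formula, i.e.\ Property~\ref{Prop:App-parts}), whereas you reorder the parts as (iii), (i), (ii) and argue in real variables: Fubini for the transposition of fractional integrals, then $\prescript{}{a}{D}_t^{\alpha/2}\varphi=\prescript{}{a}{I}_t^{1-\alpha/2}\varphi'$ followed by one classical integration by parts and the definition $\prescript{}{t}{D}_b^{\alpha/2}\psi=-\tfrac{d}{dt}\prescript{}{t}{I}_b^{1-\alpha/2}\psi$. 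This is more elementary and avoids the branch bookkeeping for $(\pm i\omega)^{\alpha/2}$, at the modest cost of justifying the differentiation under the integral sign; your parenthetical Fourier alternative is exactly the paper's route. In (ii) you use the left-inverse identity $\prescript{}{a}{D}_t^{\alpha/2}\circ\prescript{}{a}{I}_t^{\alpha/2}=I$ where the paper uses the semigroup law (\ref{eq:App-simi-der}) applied to $u-u(a)$; these are the same computation. The one delicate point---identifying, for $g=\prescript{C}{a}{D}_t^{\alpha/2}u$ which is only $H^1$ and not $C^1$ up to $t=a$ when $u'(a)\neq 0$, the classical Riemann--Liouville derivative with the limit-defined one entering (i)---is common to both arguments; you flag it explicitly, while the paper passes over it by appealing to Lemma~\ref{lm:regularity} and the smooth case, so your version is if anything the more careful of the two.
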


\begin{proof}
  If $u,v \in C_c^\infty(a,b)$, we provide a proof of $(i)$ in the Appendix, see Property~\ref{Prop:App-parts}. In general, for $u,v \in H_0^{\alpha/2}(a,b)$ we can deduce $(i)$ by a limit argument, as by definition all the elements if this set can be approximate by sequences of smooth functions (recall also Lemma~\ref{lm:frac_der}).

  Let us now prove $(ii)$. By (\ref{eq:Caputo_frac_der}) and the semigroup property (\ref{eq:App-simi-der}),
  \begin{equation} \label{eq:frac_var_3}
\prescript{C}{a}{D}_t^\alpha u (t) = \prescript{}{a}{D}_t^\alpha \left[ u(t) - u(a) \right] = \prescript{}{a}{D}_t^{\alpha/2} \prescript{}{a}{D}_t^{\alpha/2} \left[ u(t) - u(a) \right].
\end{equation}
We claim that $\prescript{}{a}{D}_t^{\alpha/2} \left[ u(t) - u(a) \right] \in H^1(a,b)$. Indeed, by (\ref{eq:frac_der_AC}) and (\ref{eq:frac_der}),
\begin{align*}
  \frac{d}{dt} \prescript{}{a}{D}_t^{\alpha/2} \left[ u(t) - u(a) \right] &= \prescript{}{a}{D}_t^{\alpha/2} u'(t) \\
                                                                          &= \frac{1}{\Gamma(1-\alpha/2)} \left[ \frac{u'(a)}{(t-a)^{\alpha/2}} + \int_a^t \frac{u''(s)}{(t-s)^{\alpha/2}} \, ds \right],
\end{align*}
and this last term belongs to $L^2(a,b)$. Therefore by Lemma~\ref{lm:regularity}, we can use (\ref{eq:frac_var_1}) together with (\ref{eq:frac_var_3}) to deduce (\ref{eq:frac_var_2}).

The proof of $(iii)$ is similar to the previous two literals, but using instead Property~\ref{Prop:App-bound} together with (\ref{eq:App-frac_int_parts_2}) and (\ref{eq:App-semi}).
\end{proof}

\begin{Theorem} \label{thm:energy_equiv}
  Let $\alpha \in (0,1)$, $(a,b)$ a bounded interval and $u$ a \emph{real valued} function. Then
  \begin{enumerate}[label=(\roman*)]
  \item if $u \in H_0^{\alpha/2}(a,b)$,
    \begin{equation}\label{eq:energy_equiv_D}
      \| \prescript{}{a}{D}_t^{\alpha/2} u \|_{L^2(\R)}^2 = \frac{1}{\cos(\alpha \pi/2)}  \int_a^b \prescript{}{a}{D}_t^{\alpha/2} {u} \prescript{}{t}{D}_b^{\alpha/2} u \, dt;
    \end{equation}
  \item if $u \in L^2(a,b)$,
    \begin{equation}\label{eq:energy_equiv_I}
    \| \prescript{}{a}{I}_t^{\alpha/2} u \|_{L^2(a,b)}^2 = \frac{1}{\cos(\alpha \pi/2)}  \int_a^b \prescript{}{a}{I}_t^{\alpha/2} {u} \prescript{}{t}{I}_b^{\alpha/2} u \, dt,
  \end{equation}
  where the Riemann-Liouville fractional integrals are taken in the sense of classical formulae (\ref{eq:frac_integral1}) and (\ref{eq:frac_integral2}), see Remark~\ref{rm:frac_AC}.
  \end{enumerate}
\end{Theorem}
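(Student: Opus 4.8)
The plan is to transfer both identities to the whole line via the zero-extension $\widetilde u$ and then compute on the Fourier side, where $\prescript{}{a}{D}_t^{\alpha/2}$, $\prescript{}{t}{D}_b^{\alpha/2}$ (and, in part (ii), the corresponding Riemann--Liouville integrals) act by multiplication with the symbols $(i\omega)^{\pm\alpha/2}$ and $(-i\omega)^{\pm\alpha/2}$. By Theorem~\ref{thm:H-alpha} we may replace $u$ by $\widetilde u\in H^{\alpha/2}(\R)$, and by the construction of the fractional derivatives in \eqref{eq:frac_der_limit} together with Lemma~\ref{lm:frac_der} one has $\prescript{}{a}{D}_t^{\alpha/2}u=\prescript{}{-\infty}{D}_t^{\alpha/2}\widetilde u$ and $\prescript{}{t}{D}_b^{\alpha/2}u=\prescript{}{t}{D}_\infty^{\alpha/2}\widetilde u$ in $L^2(\R)$; by \eqref{eq:frac_der_fourier} their Fourier transforms are $(i\omega)^{\alpha/2}\widehat{\widetilde u}$ and $(-i\omega)^{\alpha/2}\widehat{\widetilde u}$. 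For part (ii) the same is done with the Riemann--Liouville integrals: the classical left integral of $\widetilde u$, extended to all of $\R$ by the formula with lower limit $a$, has Fourier transform $(i\omega)^{-\alpha/2}\widehat{\widetilde u}$ (the reciprocal symbol), and it is exactly the hypothesis $\alpha\in(0,1)$ that makes $|\omega|^{-\alpha}\,|\widehat{\widetilde u}|^2$ integrable near the origin, so that this extension lies in $L^2(\R)$. For a completely general $u$ one may, if preferred, first prove the identities for $u\in C_c^\infty(a,b)$, where all Fourier manipulations are trivially justified, and pass to the limit using the approximation $\{\widetilde\varphi_j^{*}\}$ and the convergences recorded right after Lemma~\ref{lm:frac_der}.

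Two elementary observations are then needed. First, a support remark: the left-sided operators only involve values of their argument to the left of $t$, so applied to $\widetilde u$ they vanish for $t<a$; symmetrically the right-sided ones vanish for $t>b$. Hence the products $\prescript{}{a}{D}_t^{\alpha/2}u\cdot\prescript{}{t}{D}_b^{\alpha/2}u$ and $\prescript{}{a}{I}_t^{\alpha/2}u\cdot\prescript{}{t}{I}_b^{\alpha/2}u$ are supported in $[a,b]$, so $\int_a^b$ of either equals $\int_\R$ of the corresponding whole-line product. Second, since $\widetilde u$ is real and $\overline{(i\omega)^{\gamma}}=(-i\omega)^{\gamma}$ for real $\gamma$, each of these fractional transforms is real-valued.

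Now the computation. Let $f,g$ be the left and right fractional transform of $\widetilde u$, with symbols $(i\omega)^{\beta}\widehat{\widetilde u}$ and $(-i\omega)^{\beta}\widehat{\widetilde u}$, where $\beta=\alpha/2$ in (i) and $\beta=-\alpha/2$ in (ii). Plancherel's theorem, the identity $\overline{\widehat g(\omega)}=\widehat g(-\omega)$ (valid since $g$ is real) and $\widehat{\widetilde u}(-\omega)=\overline{\widehat{\widetilde u}(\omega)}$ give
\[
  \int_\R fg\,dt=\int_\R\widehat f(\omega)\,\widehat g(-\omega)\,d\omega=\int_\R(i\omega)^{2\beta}\,\bigl|\widehat{\widetilde u}(\omega)\bigr|^2\,d\omega=\cos(\alpha\pi/2)\int_\R|\omega|^{2\beta}\,\bigl|\widehat{\widetilde u}(\omega)\bigr|^2\,d\omega,
\]
where the last equality follows by writing $(i\omega)^{2\beta}=|\omega|^{2\beta}e^{i\sgn(\omega)\beta\pi}$, splitting the integral at $\omega=0$, and using that $|\widehat{\widetilde u}|$ is even, so that the two phases combine into $e^{i\beta\pi}+e^{-i\beta\pi}=2\cos(\beta\pi)=2\cos(\alpha\pi/2)$. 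The last integral equals $\|\prescript{}{a}{D}_t^{\alpha/2}u\|_{L^2(\R)}^2$ when $\beta=\alpha/2$, and the squared $L^2$-norm of the ($\R$-extended) fractional integral of $\widetilde u$ when $\beta=-\alpha/2$; combining with the support identity $\int_a^b fg=\int_\R fg$ and dividing by $\cos(\alpha\pi/2)>0$ yields \eqref{eq:energy_equiv_D} and \eqref{eq:energy_equiv_I}.

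The step I expect to be the main obstacle is not the trigonometry — the constant $\cos(\alpha\pi/2)$ enters only through the elementary identity $e^{i\beta\pi}+e^{-i\beta\pi}=2\cos(\beta\pi)$ — but the endpoint bookkeeping: justifying the support claim that replaces $\int_a^b$ by $\int_\R$, correctly identifying the quantity appearing on each left-hand side (in (ii), recognizing the $L^2$-norm there as that of the classically defined, $\R$-extended fractional integral of $\widetilde u$), and, in the integral case, controlling the symbol $|\omega|^{-\alpha/2}$ near $\omega=0$ so that every Fourier-side manipulation is legitimate in $L^2$ — which is precisely where the restriction $\alpha<1$ is used.
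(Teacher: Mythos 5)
Your proof is correct and follows essentially the same route as the paper's: reduce to $u \in C_c^\infty(a,b)$ (equivalently, work with the zero-extension $\widetilde u$), pass to the Fourier side where the operators act by multiplication with $(i\omega)^{\pm\alpha/2}$ and $(-i\omega)^{\pm\alpha/2}$, and extract $\cos(\alpha\pi/2)$ by splitting the frequency integral at the origin and using that $|\hat u|$ is even for real $u$ --- the paper writes out only part (i), keeping the $i\sin(\alpha\pi/2)$ term explicitly before showing it cancels, which is the same trigonometry as your $e^{i\beta\pi}+e^{-i\beta\pi}=2\cos(\beta\pi)$. The one caveat, which you yourself half-flag, concerns (ii): your Plancherel step identifies the left-hand side as $\| \prescript{}{-\infty}{I}_t^{\alpha/2} \widetilde u \|_{L^2(\R)}^2$, which includes the nonzero tail of the left-sided integral on $(b,\infty)$ and hence dominates, rather than equals, the $L^2(a,b)$ norm written in (\ref{eq:energy_equiv_I}); since the paper dismisses (ii) as ``similar,'' this discrepancy is inherited from the statement rather than introduced by you, and the nonnegativity of the right-hand side --- all that is used later in Lemma~\ref{lm:viscosity_energy} --- is unaffected.
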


\begin{proof}
  Let us prove $(i)$, the proof of $(ii)$ is similar. First note that its is enough to prove (\ref{eq:energy_equiv_D}) for $u \in C_c^\infty(a,b)$, as for general $u$ we just recall Lemma~\ref{lm:frac_der} and the comments after its proof. So suppose $u \in C_c^\infty(a,b)$. To prove (\ref{eq:energy_equiv_D}) we follow \cite{ErRo06} and use the Fourier transform, we provide here the details for completeness. Let us start by recalling the following well-known property of the Fourier transform
  \begin{equation*}
    \int_{-\infty}^\infty \hat u v = \int_{-\infty}^\infty u \hat v,
  \end{equation*}
with a similar property for the inverse Fourier transform. As $u \in C_c^\infty(a,b)$, this identity and Property~\ref{Prop:App-Fourier_deri} imply
  \begin{align*}
    \int_a^b \prescript{}{a}{D}_t^{\alpha/2} {u} \prescript{}{t}{D}_b^{\alpha/2} u \, dt &= \int_{-\infty}^\infty \prescript{}{-\infty}{D}_t^{\alpha/2} {u} \prescript{}{t}{D}_\infty^{\alpha/2} u \, dt \\
    &= \int_{-\infty}^\infty (i\omega)^{\alpha/2} \hat u \overline{(-i\omega)^{\alpha/2} \hat u} \, d\omega.
  \end{align*}
  On the other hand, by (\ref{eq:complex_power})
  \begin{equation*}
    (-i\omega)^{\alpha/2} = (i\omega)^{\alpha/2} e^{-\frac{\alpha \pi i}{2} \sgn \omega},
  \end{equation*}
  and therefore
  \begin{align*}
    &\int_a^b \prescript{}{a}{D}_t^{\alpha/2} {u} \prescript{}{t}{D}_b^{\alpha/2} u \, dt  \\
    &= \int_{-\infty}^\infty (i\omega)^{\alpha/2} \hat u \overline{(i\omega)^{\alpha/2} \hat u} e^{\frac{\alpha \pi i}{2} \sgn \omega} \, d\omega \\
    &= \int_{-\infty}^0 (i\omega)^{\alpha/2} \hat u \overline{(i\omega)^{\alpha/2} \hat u} \, d\omega e^{-\frac{\alpha \pi i}{2}} + \int_0^\infty (i\omega)^{\alpha/2} \hat u \overline{(i\omega)^{\alpha/2} \hat u} \, d\omega e^{\frac{\alpha \pi i}{2}}  \\
    &= \cos(\alpha \pi/2) \int_{-\infty}^\infty (i\omega)^{\alpha/2} \hat u \overline{(i\omega)^{\alpha/2} \hat u} \, d\omega \\
    &\phantom{==} + i \sin(\alpha \pi/2) \left[ \int_0^\infty (i\omega)^{\alpha/2} \hat u \overline{(i\omega)^{\alpha/2} \hat u} \, d\omega - \int_{-\infty}^0 (i\omega)^{\alpha/2} \hat u \overline{(i\omega)^{\alpha/2} \hat u} \, d\omega  \right] \\
    &= \cos(\alpha \pi/2) \int_{-\infty}^\infty |\omega|^\alpha |\hat u|^2 \, d\omega,
  \end{align*}
  where the last equality is a consequence of the fact that $u$ is real valued, and thereby $\overline{\hat u(-\omega)} = \hat u(\omega)$. We conclude by Plancherel theorem.
\end{proof}

To finish this section, we show a fractional version of the classical Poincaré inequality.

\begin{Theorem}[Fractional Poincaré inequality] \label{thm:frac_Poincare}
  There exists $C>0$ such that, for every $u \in H_0^{\alpha/2}(a,b)$, we have
  \begin{equation*}
    \| u  \|_{L^2(a,b)} \le C \| \prescript{}{a}{D}_t^{\alpha/2} u \|_{L^2(a,b)}.
  \end{equation*}
\end{Theorem}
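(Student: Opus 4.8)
The plan is to exploit the Fourier-analytic characterization of $H_0^{\alpha/2}(a,b)$ provided by Theorem~\ref{thm:H-alpha}, which identifies $u$ with its zero-extension $\widetilde u \in H^{\alpha/2}(\R)$, together with the compactness of the support of $\widetilde u$. The key point is that for a function supported in a \emph{bounded} interval, control of the fractional seminorm $\|\prescript{}{a}{D}_t^{\alpha/2}u\|_{L^2}$ already forces control of the full $L^2$ norm; the mechanism is that low-frequency behaviour of $\widehat{\widetilde u}$ is tamed by the support constraint. Concretely, since $\supp\widetilde u\subset[a,b]$ with $b-a<\infty$, the function $\widehat{\widetilde u}(\omega)=\int_a^b \widetilde u(t)e^{-i\omega t}\,dt$ is entire and in particular continuous and bounded near $\omega=0$, with $|\widehat{\widetilde u}(\omega)|\le (b-a)^{1/2}\|u\|_{L^2(a,b)}$ by Cauchy--Schwarz. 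By Plancherel, $\|u\|_{L^2(a,b)}^2=\frac{1}{2\pi}\int_{\R}|\widehat{\widetilde u}(\omega)|^2\,d\omega$, so I would split this integral at $|\omega|=R$ for a radius $R$ to be chosen. On $|\omega|\ge R$ one has $|\widehat{\widetilde u}(\omega)|^2\le R^{-\alpha}|\omega|^\alpha|\widehat{\widetilde u}(\omega)|^2$, which integrates to at most $R^{-\alpha}$ times $\|\prescript{}{-\infty}{D}_t^{\alpha/2}\widetilde u\|_{L^2(\R)}^2=\|\prescript{}{a}{D}_t^{\alpha/2}u\|_{L^2(a,b)}^2$ (using (\ref{eq:frac_der_fourier}) and that the seminorm is independent of left/right choice).

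The remaining low-frequency piece $\frac{1}{2\pi}\int_{|\omega|<R}|\widehat{\widetilde u}(\omega)|^2\,d\omega$ is the crux, and must be absorbed into the left-hand side. A clean way to do this is a contradiction/compactness argument in the spirit of the classical Poincaré proof: suppose the inequality fails, so there are $u_k\in H_0^{\alpha/2}(a,b)$ with $\|u_k\|_{L^2(a,b)}=1$ and $\|\prescript{}{a}{D}_t^{\alpha/2}u_k\|_{L^2(a,b)}\to 0$. Then $\{u_k\}$ is bounded in $H_0^{\alpha/2}(a,b)$, and since this is a space of functions supported in a fixed bounded interval with $\alpha/2>0$, the embedding $H_0^{\alpha/2}(a,b)\hookrightarrow L^2(a,b)$ is compact (this is the fractional Rellich--Kondrachov theorem; alternatively it follows directly from the Fourier picture: the zero-extensions $\widetilde u_k$ are uniformly bounded in $H^{\alpha/2}(\R)$ and uniformly supported in $[a,b]$, so the Fréchet--Kolmogorov criterion applies—one checks equicontinuity of translations via the $|\omega|^{\alpha/2}$ weight and tightness via the compact support). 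Passing to a subsequence, $u_k\to u$ in $L^2(a,b)$, hence $\|u\|_{L^2(a,b)}=1$. On the other hand $\prescript{}{a}{D}_t^{\alpha/2}u_k\to 0$ in $L^2$, and since $u_k\to u$ in $L^2$ while $\{u_k\}$ is bounded in $H_0^{\alpha/2}$, a weak-limit argument identifies the limit of $\prescript{}{a}{D}_t^{\alpha/2}u_k$ as $\prescript{}{a}{D}_t^{\alpha/2}u$; thus $\prescript{}{a}{D}_t^{\alpha/2}u=0$. By Theorem~\ref{thm:energy_equiv}(i), $\|\prescript{}{a}{D}_t^{\alpha/2}u\|_{L^2(\R)}^2$ controls $\int_a^b\prescript{}{a}{D}_t^{\alpha/2}u\,\prescript{}{t}{D}_b^{\alpha/2}u\,dt$ up to the constant $\cos(\alpha\pi/2)$, and examining the Fourier side ($\||\omega|^{\alpha/2}\widehat{\widetilde u}\|_{L^2}=0$) forces $\widehat{\widetilde u}\equiv 0$ off $\omega=0$, hence $\widetilde u\equiv 0$ and $u\equiv 0$, contradicting $\|u\|_{L^2}=1$.

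Alternatively—and this is probably the shorter route to include—one can avoid compactness entirely by making the direct Fourier split quantitative: bound the low-frequency term by $\frac{1}{2\pi}\cdot 2R\cdot\sup_{|\omega|<R}|\widehat{\widetilde u}(\omega)|^2\le \frac{R(b-a)}{\pi}\|u\|_{L^2(a,b)}^2$, which is \emph{not} directly absorbable since it is $\|u\|_{L^2}^2$ times a constant that does not go to zero. So the naive split does not close, and one genuinely needs the support information in a sharper form: the right estimate is $\int_{|\omega|<R}|\widehat{\widetilde u}(\omega)|^2\,d\omega\le C\,R^{1+\alpha}\|\,|\omega|^{\alpha/2}\widehat{\widetilde u}\|^2$ is false too, so the honest statement is that the low-frequency norm cannot be bounded by the seminorm alone without using that $\int \widetilde u$ relates to nothing special—hence the contradiction argument really is the natural proof, and I would present it as above. \textbf{Main obstacle.} The main difficulty is precisely that the fractional seminorm $\|\prescript{}{a}{D}_t^{\alpha/2}u\|_{L^2}$ does \emph{not} vanish only on constants (unlike the classical gradient), so one cannot mimic the integer-order Poincaré proof by quotienting out a one-dimensional kernel; instead the compact support encoded in the zero-extension $\widetilde u\in H^{\alpha/2}(\R)$ is what makes the kernel trivial, and the cleanest rigorous way to harness this is the compactness-of-embedding (Fréchet--Kolmogorov) argument, whose verification of tightness and equicontinuity in the Fourier variable is the one genuinely technical step.
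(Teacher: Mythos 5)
Your argument is workable, but it is a genuinely different and much heavier route than the one the paper takes. The paper's proof is two lines: by Property~\ref{Prop:App-inverse}, $u = \prescript{}{a}{I}_t^{\alpha/2}\,\prescript{}{a}{D}_t^{\alpha/2}u$ on $(a,b)$, and by Property~\ref{Prop:App-bound} the fractional integral $\prescript{}{a}{I}_t^{\alpha/2}$ is a bounded operator on $L^2(a,b)$; since $\prescript{}{a}{I}_t^{\alpha/2}v(t)$ for $t\in(a,b)$ only sees $v$ on $(a,t)$, this immediately gives $\|u\|_{L^2(a,b)}\le C\|\prescript{}{a}{D}_t^{\alpha/2}u\|_{L^2(a,b)}$ with an explicit constant (essentially $(b-a)^{\alpha/2}/\Gamma(1+\alpha/2)$). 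Your compactness-and-contradiction scheme replaces this with the fractional Rellich--Kondrachov theorem, which is nowhere established in the paper and which you only sketch via Fr\'echet--Kolmogorov; you correctly identify it as the technical crux, but as written it remains an unproved black box, and it also sacrifices any explicit constant. You were also right to abandon the direct Fourier split: the low-frequency block genuinely cannot be absorbed, and your candid discussion of why is accurate.

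Two points in your contradiction argument need repair before it closes. First, the claim that $\|\prescript{}{a}{D}_t^{\alpha/2}u_k\|_{L^2(a,b)}\to 0$ makes $\{u_k\}$ bounded in $H_0^{\alpha/2}(a,b)$ is not immediate: the $H_0^{\alpha/2}$ seminorm is the $L^2(\R)$ norm of the fractional derivative, which a priori exceeds its restriction to $(a,b)$. You can bridge this with Theorem~\ref{thm:energy_equiv}$(i)$ and Cauchy--Schwarz, which yield
\begin{equation*}
  \|\prescript{}{a}{D}_t^{\alpha/2}u\|_{L^2(\R)} \le \frac{1}{\cos(\alpha\pi/2)}\,\|\prescript{}{a}{D}_t^{\alpha/2}u\|_{L^2(a,b)},
\end{equation*}
but this step must be stated, both here and in the final identification $\widehat{\widetilde u}\equiv 0$ (and Theorem~\ref{thm:energy_equiv} is stated for real-valued $u$, so split into real and imaginary parts). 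Second, your closing paragraph invoking Theorem~\ref{thm:energy_equiv}$(i)$ is muddled as written; the clean statement is simply that once the $L^2(\R)$ seminorm vanishes, $|\omega|^{\alpha/2}\widehat{\widetilde u}=0$ a.e.\ forces $\widetilde u=0$. With those repairs your proof is correct, but the paper's inverse-plus-boundedness argument is both shorter and self-contained within the stated appendix properties, and I would recommend it.
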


\begin{proof}
  By Property~\ref{Prop:App-inverse} and \ref{Prop:App-bound} we deduce that there exists a positive constant $C$ such that
  \begin{equation*}
    \| u \|_{L^2(a,b)} = \| \prescript{}{a}{I}_t^{\alpha/2} \prescript{}{a}{D}_t^{\alpha/2} u \|_{L^2(a,b)} \le C \| \prescript{}{a}{D}_t^{\alpha/2} u \|_{L^2(a,b)} 
  \end{equation*}
  for all $u \in H_0^{\alpha/2}(a,b)$, as we stated.
\end{proof}


\section{Space-time fractional spaces}

In this section we present suitable space-time fractional spaces to implement a Galerkin method to solve (\ref{eq:main_1}) in the next section. This approach is based on classical ideas coming from Fourier analysis, specially the separation of variables method.

We start by recalling some notions of strong measurability and integrability of functions with values in Hilbert spaces. In what follows $H$ represents a separable complex Hilbert space with inner product $(\cdot,\cdot)$, norm $\| \cdot \|$ and orthogonal basis $\{ w_k \}_{k=1}^\infty$. To fix ideas, we are mainly interested in the Sobolev space $\bH_0^1(\Omega)$ of \emph{vector fields} in $\Omega$, a bounded domain of $\R^n$ with Lipschitz boundary (see next section); a possible orthogonal basis for this space can be assembled with the eigenfuntions of the Laplace operator in the domain.

We recall also some well-known facts about measurable functions with values in Hilbert spaces, for more details see Sections~V.4 and V.5 of \cite{Yo80} and Appendix E.5 of \cite{Ev2010}. Let $\alpha \ge 0$, $(a,b)$ be an open interval of $\R$, where $a \in [-\infty,\infty)$ and $b \in (-\infty,\infty]$ with $a<b$, and $H$ be a separable complex Hilbert space.
  \begin{enumerate}[label=(\roman*)]
  \item A function $\varsigma:(a,b) \to H$ is called \emph{simple} if it has the form
    \begin{equation}\label{eq:simple_funct_1}
      \varsigma(t) = \sum_{j=1}^m e_j \chi_{E_j}(t) w_j, \quad t \in (a,b),
    \end{equation}
    where each $E_j$ is a Lebesgue measurable subset of $(a,b)$ and $e_j \in \C$, $j=1,\dots,m$.
  \item A function $u:(a,b) \to H$ is \emph{strongly measurable} if there exists a sequence of simple functions $\varsigma_k:(a,b) \to H$ such that
    \begin{equation*}
      \varsigma_k(t) \to u(t) \quad \text{for a.e. } a < t < b.
    \end{equation*}
  \item We define $L^2(a,b;H)$ as the set of strongly measurable functions $u:(a,b) \to H$ such that
    \begin{equation*}
      \| u \|_{L^2(a,b;H)} := \left( \int_a^b \| u(t) \|^2 \,  dt \right)^{1/2} < \infty.
    \end{equation*}
    This is a separable complex Hilbert space.
  \end{enumerate}

Let us remark that every $u \in L^2(a,b;H)$ can be approximate with smooth (in time) functions of the form
\begin{equation} \label{eq:simple_funt_2}
  \varsigma(t) = \sum_{j=1}^m \varphi_j(t) w_j, 
\end{equation}
where $\varphi_j \in C_c^\infty(a,b)$, instead of characteristics (as in (\ref{eq:simple_funct_1})). In what follows, we use $S$ to denote the set of \emph{simple functions}, that is, functions of the form (\ref{eq:simple_funct_1}) or (\ref{eq:simple_funt_2}).  

\begin{Definition} \label{Def:frac_space-time}
  For every $\alpha \ge 0$ and $\varsigma \in S$, with
\begin{equation*}
  \varsigma(t) = \sum_{j=1}^m \varphi_j(t) w_j,
\end{equation*}
we define the norm
\begin{align*}
  \| \varsigma \|_{H_0^\alpha(a,b;H)} &:= \left( \| \varsigma \|_{L^2(a,b;H)}^2 + \| \prescript{}{0}{D}_t^\alpha \varsigma \|_{L^2(a,b;H)}^2 \right)^{1/2} \\
                            & = \left( \sum_{j=1}^m \| \varphi_j \|_\alpha^2 \| w_j \|^2  \right)^{1/2},
\end{align*}
where $\| \varphi_j \|_\alpha$ is the norm considered in (\ref{eq:norm}) and (\ref{eq:norm_2}). Let $\alpha \ge 0$, $(a,b)$ be an open interval of $\R$, where $a \in [-\infty,\infty)$ and $b \in (-\infty,\infty]$ with $a<b$, and $H$ be a separable complex Hilbert space. We define the fractional space $H_0^\alpha(a,b;H)$ as the closure of the set $S$ under the norm $\| \cdot \|_{H_0^\alpha(a,b;H)}$.
\end{Definition}

The fractional space $H_0^\alpha(a,b;H)$ is a Hilbert subspace of $L^2(a,b;H)$ with the inner
product and norm inherit under the limit of elements in $S$. As in the previous section, this space and its inner product and norm are independent of the approximating sequence in $S$ and the orthogonal basis $\{ w_k \}_{k=1}^\infty$. This approach is similar to the one considered in Definition~\ref{Def:fractional_interval}.

As we mentioned in the previous section, we are mainly interested in fractional spaces of order between 0 and 1/2. For this range of fractions we have analogous properties to those in Theorem~\ref{thm:H-alpha} and Lemma~\ref{lm:regularity}; the proof of these properties are similar and we leave the details to the reader. As before, we denote by $\widetilde{u}$ the continuation of $u$ by zero outside of $(a,b)$.

\begin{Theorem}\label{thm:H-alpha-time}
    For every $\alpha \in (0,1)$ the following properties are equivalent:
  \begin{enumerate}[label=(\roman*)]
  \item $u \in H_0^{\alpha/2}(a,b;H)$;
  \item $\widetilde {u} \in H^{\alpha/2}(\R;H)$;
  \item there exists ${U} \in H^{\alpha/2}(\R;H)$ such that restricted to $(a,b)$ is equal to $u$.
  \end{enumerate}
\end{Theorem}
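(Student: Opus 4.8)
The plan is to reduce the vector‑valued statement, coordinate by coordinate, to the scalar Theorem~\ref{thm:H-alpha}, in the spirit of separation of variables with respect to the orthogonal basis $\{w_k\}_{k=1}^\infty$ of $H$. Given a strongly measurable $u\colon(a,b)\to H$ in $L^2(a,b;H)$, write $u_k(t):=(u(t),w_k)/\|w_k\|^2$ for its $k$‑th coordinate, so that $u(t)=\sum_k u_k(t)w_k$ for a.e.\ $t$ and $\|u\|_{L^2(a,b;H)}^2=\sum_k\|u_k\|_{L^2(a,b)}^2\|w_k\|^2$ by Parseval. Extension by zero commutes with taking coordinates, so $\widetilde u=\sum_k\widetilde{u_k}\,w_k$, and on $\R$ the time Fourier transform acts coordinatewise; hence, recalling that $H^{\alpha/2}(\R;H)$ is just $H_0^{\alpha/2}(\R;H)$ of Definition~\ref{Def:frac_space-time} (exactly as $H^{\alpha/2}(\R)=H_0^{\alpha/2}(\R)$ in the scalar setting), the $H^{\alpha/2}(\R;H)$‑norm of an element $U$ equals $(\sum_k\|U_k\|_{\alpha/2}^2\|w_k\|^2)^{1/2}$.

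The technical core is the following coordinatewise description of the two spaces, which I would establish first: $u\in H_0^{\alpha/2}(a,b;H)$ if and only if $u_k\in H_0^{\alpha/2}(a,b)$ for every $k$ and $\sum_k\|u_k\|_{\alpha/2}^2\|w_k\|^2<\infty$; and likewise $\widetilde u\in H^{\alpha/2}(\R;H)$ if and only if $\widetilde{u_k}\in H^{\alpha/2}(\R)$ for every $k$ with $\sum_k\|\widetilde{u_k}\|_{\alpha/2}^2\|w_k\|^2<\infty$. For the ``only if'' direction one takes an approximating sequence $\varsigma_m\in S$ with $\varsigma_m\to u$; coordinate projection is norm‑nonincreasing (on simple functions $\|(\varsigma)_k\|_{\alpha/2}\|w_k\|\le\|\varsigma\|_{H_0^{\alpha/2}(a,b;H)}$, and this passes to the closure), so $(\varsigma_m)_k$ is Cauchy in $\|\cdot\|_{\alpha/2}$ and converges to $u_k$ in $L^2(a,b)$, giving $u_k\in H_0^{\alpha/2}(a,b)$, while $\sum_k\|u_k\|_{\alpha/2}^2\|w_k\|^2\le\liminf_m\|\varsigma_m\|_{H_0^{\alpha/2}(a,b;H)}^2<\infty$ by Fatou. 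For the ``if'' direction one truncates $u^{(N)}:=\sum_{k\le N}u_kw_k$, approximates each $u_k$ in $\|\cdot\|_{\alpha/2}$ by functions of $C_c^\infty(a,b)$ to place $u^{(N)}\in H_0^{\alpha/2}(a,b;H)$, and lets $N\to\infty$, the summability making $\{u^{(N)}\}$ Cauchy with limit $u$. The argument on $\R$ is identical, using $C_c^\infty(\R)$ and Property~\ref{Prop:density}.

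Granting this description, the equivalence is immediate from the scalar theorem. Theorem~\ref{thm:H-alpha} gives, for each $k$, $u_k\in H_0^{\alpha/2}(a,b)\Leftrightarrow\widetilde{u_k}\in H^{\alpha/2}(\R)$, and whenever these hold $\|u_k\|_{\alpha/2}=\|\widetilde{u_k}\|_{\alpha/2}$, because $\|\cdot\|_{\alpha/2}$ does not see whether a function of $C_c^\infty(a,b)$ is regarded as living on $(a,b)$ or on $\R$, and both spaces are the $\|\cdot\|_{\alpha/2}$‑closure of this common set of test functions. Hence the two summability conditions agree term by term, yielding $(i)\Leftrightarrow(ii)$. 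The implication $(ii)\Rightarrow(iii)$ is trivial with $U=\widetilde u$, and $(iii)\Rightarrow(ii)$ follows by applying the scalar implication $(iii)\Rightarrow(ii)$ to each $U_k$ (which restricts to $u_k$ on $(a,b)$): this gives $\widetilde{u_k}\in H^{\alpha/2}(\R)$ together with a bound $\|\widetilde{u_k}\|_{\alpha/2}\le C\|U_k\|_{\alpha/2}$ with $C=C(\alpha,a,b)$ independent of $k$, so that $\sum_k\|\widetilde{u_k}\|_{\alpha/2}^2\|w_k\|^2\le C^2\sum_k\|U_k\|_{\alpha/2}^2\|w_k\|^2<\infty$, whence $\widetilde u\in H^{\alpha/2}(\R;H)$.

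I expect the main obstacle to be exactly the coordinatewise characterization of $H_0^{\alpha/2}(a,b;H)$ in the second paragraph: one must verify carefully that the closure of the simple functions $S$ is precisely the set described, i.e.\ that coordinate projection and summation interchange correctly with the limits defining the space‑time norm and with the $L^2$‑convergence used to identify the limit, and — for $(iii)\Rightarrow(ii)$ — that the constant furnished by the scalar extension argument depends only on $\alpha$ and $(a,b)$, not on $k$ (which it does, being the norm of the extension‑by‑zero operator on $H^{\alpha/2}$, bounded for $\alpha<1$). Everything else is a faithful transcription of the arguments in the proofs of Theorem~\ref{thm:H-alpha} and Lemma~\ref{lm:regularity}; the same bookkeeping also yields the vector‑valued analogue of Lemma~\ref{lm:regularity}.
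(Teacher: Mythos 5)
Your argument is correct, but it is worth noting that the paper does not actually write this proof out: it states that the proof is ``similar'' to those of Theorem~\ref{thm:H-alpha} and Lemma~\ref{lm:regularity} and leaves the details to the reader. The intended route is therefore a direct transcription of the scalar argument — the cut-off estimate (\ref{eq:fourier_comp}) and the mollification step carried out verbatim for $H$-valued functions, which makes sense because the Fourier transform, Plancherel's theorem and convolution all extend to $L^2(\R;H)$ for separable Hilbert $H$. Your route is genuinely different: you reduce to the scalar theorem coordinatewise via the basis $\{w_k\}$, which lets you quote Theorem~\ref{thm:H-alpha} as a black box but obliges you to first prove the weighted $\ell^2$-direct-sum characterization of $H_0^{\alpha/2}(a,b;H)$ and $H^{\alpha/2}(\R;H)$; your Fatou/truncation argument for that characterization is sound, as is the observation that $\|\varphi\|_{\alpha/2}$ is the same whether $\varphi\in C_c^\infty(a,b)$ is viewed on $(a,b)$ or on $\R$, which makes $(i)\Leftrightarrow(ii)$ term-by-term. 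The one point where your reduction demands strictly more than the qualitative scalar statement is $(iii)\Rightarrow(ii)$: you need $\|\widetilde{U_k|_{(a,b)}}\|_{\alpha/2}\le C\|U_k\|_{\alpha/2}$ with $C$ independent of $k$, i.e.\ the boundedness of multiplication by $\chi_{(a,b)}$ on $H^{\alpha/2}(\R)$, valid precisely because $\alpha/2<1/2$; you identify this correctly, and it is implicit in the quantitative content of (\ref{eq:fourier_comp}), but it should be stated as a lemma rather than folded into a parenthesis. In exchange, your approach cleanly yields the vector-valued Lemma~\ref{lm:space-time_regularity} with no extra work, whereas the paper's direct transcription avoids all the bookkeeping about interchanging coordinate projections with the closure defining the norm.
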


Let us recall the classical time Sobolev space of order one
\begin{equation}\label{eq:classical_time_Sobolev}
  H^1(a,b;H) := \{ u \in L^2(a,b;H): u' \in L^2(a,b;H) \},
\end{equation}
for more details see Section~5.9 of \cite{Ev2010}.
\begin{Lemma} \label{lm:space-time_regularity}
  For every $\alpha \in (0,1)$ we have the contention $H^1(a,b;H) \subset H_0^{\alpha/2}(a,b;H)$.
\end{Lemma}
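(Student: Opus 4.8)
The plan is to reduce the vector-valued statement to the scalar case already established in Lemma~\ref{lm:regularity}, exploiting the fact that the space-time fractional norm decouples over an orthogonal basis. First I would take $u \in H^1(a,b;H)$ and expand it in the orthogonal basis $\{w_k\}$: write $u(t) = \sum_k c_k(t)\, w_k$ with $c_k(t) = (u(t),w_k)/\|w_k\|^2$. The hypothesis $u' \in L^2(a,b;H)$ gives, upon differentiating the (scalar) coefficients, that each $c_k \in H^1(a,b)$, and moreover $\sum_k \|c_k'\|_{L^2(a,b)}^2 \|w_k\|^2 = \|u'\|_{L^2(a,b;H)}^2 < \infty$ and similarly for $\sum_k\|c_k\|_{L^2(a,b)}^2\|w_k\|^2$. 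By Lemma~\ref{lm:regularity} each $c_k \in H_0^{\alpha/2}(a,b)$, so it remains to control the sum.

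The second step is to bound the space-time fractional norm by the $H^1$ norm uniformly over the basis. Following the proof of Lemma~\ref{lm:regularity} (which in turn invokes the estimate \eqref{eq:fourier_comp}), for a scalar $c \in H^1(a,b)$ one has $\|\widetilde c\|_{\alpha/2}^2 \le C\,\|c\|_{H^1(a,b)}^2$ with $C = C(\alpha,a,b)$ independent of $c$; indeed \eqref{eq:fourier_comp} shows $\widehat{\widetilde c}(\omega) = O(1/|\omega|)$ with the implied constant controlled by $|c(a)|, |c(b)|$ and $\|c'\|_{L^1(a,b)}$, all dominated by $\|c\|_{H^1(a,b)}$ (using the trace/Sobolev embedding $H^1(a,b)\hookrightarrow C[a,b]$ on the bounded interval), while on the bounded-frequency part $|\omega|\le 1$ one bounds $|\omega|^{\alpha/2}|\widehat{\widetilde c}| \le |\widehat{\widetilde c}| \le \|\widetilde c\|_{L^1}$. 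Summing, $\sum_k \|\widetilde{c_k}\|_{\alpha/2}^2\|w_k\|^2 \le C \sum_k \|c_k\|_{H^1(a,b)}^2\|w_k\|^2 = C\,\|u\|_{H^1(a,b;H)}^2 < \infty$.

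The third step is to exhibit an approximating sequence in $S$. Truncating the series, set $u^{(N)}(t) = \sum_{k=1}^N c_k(t) w_k$; since each $c_k \in H_0^{\alpha/2}(a,b)$ it can be approximated in $\|\cdot\|_{\alpha/2}$ by functions in $C_c^\infty(a,b)$, so $u^{(N)} \in H_0^{\alpha/2}(a,b;H)$, and the uniform bound above shows $\|u - u^{(N)}\|_{H_0^{\alpha/2}(a,b;H)}^2 \le C \sum_{k>N}\|c_k\|_{H^1(a,b)}^2\|w_k\|^2 \to 0$. Hence $u$ lies in the closure of $S$ under $\|\cdot\|_{H_0^{\alpha/2}(a,b;H)}$, i.e. $u \in H_0^{\alpha/2}(a,b;H)$.

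The main obstacle I anticipate is making the constant in the scalar estimate $\|\widetilde c\|_{\alpha/2} \le C\|c\|_{H^1(a,b)}$ genuinely uniform in $c$: the proof of Lemma~\ref{lm:regularity} as written only asserts membership, not a quantitative bound, so one must reread \eqref{eq:fourier_comp} carefully and invoke the one-dimensional Sobolev embedding $H^1(a,b) \hookrightarrow C[a,b]$ (valid because $(a,b)$ is bounded) to control the boundary terms $c(a), c(b)$. Everything else — the coefficientwise differentiation, the Parseval-type identity for the space-time norm, and the truncation argument — is routine given Definition~\ref{Def:frac_space-time} and the decoupled form of the norm stated there. Alternatively, and perhaps more cleanly, one can bypass the explicit constant by combining Theorem~\ref{thm:H-alpha-time} with the observation that $\widetilde u \in H^{\alpha/2}(\R;H)$ whenever $u\in H^1(a,b;H)$, proved exactly as in Lemma~\ref{lm:regularity} but with $|\widehat{\widetilde u}(\omega)|$ replaced by the $H$-norm $\|\widehat{\widetilde u}(\omega)\|$ and the scalar estimate \eqref{eq:fourier_comp} applied componentwise; this is the route the phrase ``the proof of these properties are similar and we leave the details to the reader'' seems to intend.
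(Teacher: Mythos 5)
Your proposal is correct. Note that the paper itself does not write out a proof of this lemma: it states that the argument is ``similar'' to the scalar results Theorem~\ref{thm:H-alpha} and Lemma~\ref{lm:regularity} and leaves the details to the reader, which is precisely the route you sketch at the end (extend $u$ by zero, apply the integration-by-parts estimate (\ref{eq:fourier_comp}) with $|\widehat{\widetilde u}(\omega)|$ replaced by $\|\widehat{\widetilde u}(\omega)\|_H$, and invoke the vector-valued analogue Theorem~\ref{thm:H-alpha-time}). Your primary argument is genuinely different and more self-contained: by expanding $u$ in the orthogonal basis $\{w_k\}$, exploiting that the norm of Definition~\ref{Def:frac_space-time} decouples as $\sum_k\|\varphi_k\|_{\alpha/2}^2\|w_k\|^2$, and upgrading Lemma~\ref{lm:regularity} to the quantitative bound $\|\widetilde c\|_{\alpha/2}\le C(\alpha,a,b)\|c\|_{H^1(a,b)}$ (via the trace bound $|c(a)|,|c(b)|\le C\|c\|_{H^1(a,b)}$ for the high-frequency part and $\|\widetilde c\|_{L^1}$ for $|\omega|\le 1$), you reduce everything to the scalar case and conclude by truncation, with the tail estimate showing the partial sums are Cauchy in $H_0^{\alpha/2}(a,b;H)$. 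What this buys is twofold: it avoids having to set up the $H$-valued Fourier transform and the space $H^{\alpha/2}(\R;H)$ (which the paper uses but never formally defines), and it makes explicit the continuity of the embedding $H^1(a,b;H)\hookrightarrow H_0^{\alpha/2}(a,b;H)$, not merely the set inclusion --- a uniformity that the paper's deferred argument glosses over but that is exactly what is needed when the lemma is applied to sequences. You correctly flag this uniform constant as the only nontrivial point; your treatment of it is sound.
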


We observe that this lemma implies that in any \emph{bounded} interval every $u \in C^1([a,b];H)$ belongs to the space $u \in H_0^{\alpha/2}(a,b;H)$. Thereby the functions in this fractional space leave no trace on the extremes of  interval endpoints, so they are not necessarily equal to zero there.

\begin{Definition}
  \begin{enumerate}[label=(\roman*)]
  \item If $\alpha \in (0,1)$ and $u \in H_0^{\alpha/2}(a,b;H)$, we define the Riemann-Liouville fractional derivatives as
\begin{equation} \label{eq:R-L_der}
  \prescript{}{a}{D}_t^{\alpha/2} u := \lim_{k \to \infty} \prescript{}{-\infty}{D}_t^{\alpha/2} \varsigma_k \quad \text{and} \quad \prescript{}{t}{D}_b^{\alpha/2} u := \lim_{k \to \infty} \prescript{}{t}{D}_\infty^{\alpha/2} \varsigma_k,
\end{equation}
where $\{\varsigma_k\}_{k=1}^\infty \subset S$ is a Cauchy sequence with the norm $\| \cdot \|_{H_0^{\alpha/2}(a,b;H)}$, such that $\varsigma_k \to u$ in $L^2(a,b;H)$.
\item If $\alpha > 0$, $(a,b)$ is a \emph{bounded interval} and  $u \in L^2(a,b;H)$ we define the Riemann-Liouville fractional integrals 
\begin{equation} \label{eq:R-L_int}
  \prescript{}{a}{I}_t^\alpha u := \lim_{k \to \infty} \prescript{}{-\infty}{I}_t^\alpha \varsigma_k \quad \text{and} \quad \prescript{}{t}{I}_b^\alpha u := \lim_{k \to \infty} \prescript{}{t}{I}_\infty^\alpha \varsigma_k,
\end{equation}
where $\varsigma_k \to u$ in $L^2(a,b;H)$.
  \end{enumerate}  
\end{Definition}

The limits in (\ref{eq:R-L_der}) are taken in the $L^2(\R;H)$ sense and, thereby, both fractional derivatives belong to this space. Moreover, as in the previous section these fractional operators are well-defined in the sense that they don’t depend nor on the approximating sequence neither on the orthogonal basis $\{ w_k \}_{k=1}^\infty$. On the other hand, by the continuity of the Riemann-Liouville fractional integral operator in the $L^2(a,b)$ time space, see Property~\ref{Prop:App-bound}, the space-time operators in (\ref{eq:R-L_int}) are good defined, i.e. they don't depend nor on the approximating sequence neither on the orthogonal basis $\{ w_k \}_{k=1}^\infty$.


\section{Existence, uniqueness and \emph{a priori} estimates} \label{sec:existence}

In this section we implement a Galerkin method in fractional spaces to prove Theorem~\ref{thm:main}. Along this section we prove also important energy estimates that, in the end, imply the \emph{a priori} estimate (\ref{eq:energy_est}).

Lets start with the notion of vector fields. Given a  complex functional space $X$  and a natural number $n$, we denote by ${\bf X}$ the $n$-dimensional space of \emph{vector fields} $X^n$. In particular, if $H$ is a complex Hilbert space with inner product $(\cdot,\cdot)$ and norm $\| \cdot \|$, then ${\bf H}$ is a Hilbert space with inner product and norm
\begin{equation} \label{eq:vector_inner_prod}
  (u,v)_{\bf H} := \sum_{i=1}^n (u_i,v_i), \quad  \| u \|_{\bf H} = \left( \sum_{i=1}^n \| u_i \|^2 \right)^{1/2},
\end{equation}
respectively, where $u=(u_1,\dots,u_n)$ and $v=(v_1,\dots,v_n)$ belong to ${\bf H}$.

Fixed $\alpha \in (0,1)$, we recall the fractional initial boundary problem we are interested:
\begin{equation} \label{eq:main_2}
  \left\{
\begin{aligned}
    &\rho(x) \frac{\partial^2 u}{\partial t^2} - \frac{\partial}{\partial x_i} \left[ B_{ij}(x) \frac{\partial}{\partial x_j} \prescript{C}{0}{D}_t^\alpha u \right] - \frac{\partial}{\partial x_i} \left[ A_{ij}(x) \frac{\partial u}{\partial x_j} \right] = f(x,t), \\
    &\hphantom{aaaaaaaaaaaaaaaaaaaaaaaaaaaaaaaaaaaaaaaa}(x,t) \in \Omega \times (0,T), \\
    &u(x,t) = 0, \quad (x,t) \in \partial \Omega \times (0,T), \\
    &u(x,0) = g(x), \quad \frac{\partial u}{\partial t}(x,0) = h(x), \quad x \in \Omega, 
  \end{aligned}
  \right.
\end{equation}
where the summation over the repeating indices is assumed. Here $\Omega$ is a bounded domain of $\R^n$ with Lipschitz boundary and $u$ the unknown displacement vector field. The operator $\prescript{C}{0}{D}_t^\alpha$ is the Caputo fractional derivative in time
\begin{equation*}
\prescript{C}{0}{D}_t^\alpha u = \prescript{}{0}{D}_t^\alpha (u-g),
\end{equation*}
for smooth $u$ and $g$, see (\ref{eq:Caputo_frac_der}). Let us also assume the following hypothesis:

\begin{enumerate}[label=(H.\arabic*)]
\item \label{H1} The elastic and fractional viscosity tensors are matrix valued funtions: $A_{ij} = (A_{ij}^{kl})_{1 \le k,l \le n} \in \R^{n^2 \times n^2}$, $B_{ij} = (B_{ij}^{kl})_{1 \le k,l \le n} \in \R^{n^2 \times n^2}$, and they are measurable functions of their arguments. We also assume the symmetry on these tensors:
  \begin{equation*}
    A_{ij}^{kl} = A_{ji}^{kl} = A_{ij}^{lk} = A_{kl}^{ij}, \quad , B_{ij}^{kl} = B_{ji}^{kl} = B_{ij}^{lk} = B_{kl}^{ij} \quad 1 \le i,j,k,l \le n.
  \end{equation*}
  Additionally, we suppose that $A$ and $B$ are uniformly elliptic, i.e. there exists $\nu > 0$ such that for all symmetric matrices $(\eta_{ij}) \in \R^{n \times n}$
  \begin{gather*}
    \nu \eta_{ij}^2 \le A_{ij}^{kl}(x) \eta_{ij} \eta_{kl} \le \frac{1}{\nu} \eta_{ij}^2 \quad \text{and} \\
   \nu \eta_{ij}^2 \le B_{ij}^{kl}(x) \eta_{ij} \eta_{kl} \le \frac{1}{\nu} \eta_{ij}^2,
  \end{gather*}
  a.e. $x \in \Omega$.
\item \label{H2} The mass density function $\rho$ is measurable and there exists $\rho_0>0$ such that
  \begin{equation*}
    \rho_0 \le \rho(x) \le \frac{1}{\rho_0}, \quad a.e.\ x \in \Omega. 
  \end{equation*}
\item \label{H3} $f \in L^2(0,T;\bH^{-1}(\Omega))$, $g \in \bH_0^1(\Omega)$ and $h \in \bL^2(\Omega)$.
\end{enumerate}
Lets recall the usual notation $\bH^{-1}(\Omega)$ for the dual of the space $\bH_0^1(\Omega)$, where every action of one of its elements $f \in \bH^{-1}(\Omega)$ on $v \in \bH_0^1(\Omega)$ is represented by $\langle f, v \rangle$.

We'll use a variational method to study (\ref{eq:main_2}) that requires a suitable notion of weak solution. This notion can be motivated by the fractional integration by parts formula (\ref{eq:frac_var_2}). Before stating the definition of weak solutions to (\ref{eq:main_2}), recall the fractional space $H_0^{\alpha/2}(0,T;\bH_0^1(\Omega))$ in Definition~\ref{Def:frac_space-time}. Let us also recall (\ref{eq:vector_inner_prod}) for the inner product of the real Hilbert space of vector fields $\bL^2(\Omega)$:
\begin{equation*}
  (u,v) := \sum_{i=1}^n \int_\Omega u_i v_i \, dx,
\end{equation*}
where $u=(u_1,\dots,u_n)$, $u_i \in L^2(\Omega)$, and  $v=(v_1,\dots,v_n)$, $v_i \in L^2(\Omega)$, $i=1,\dots,n$, are vector fields.

\paragraph{Notion of weak solution for (\ref{eq:main_2}).} A function $u \in H_0^{\alpha/2}(0,T;\bH_0^1(\Omega))$ is a weak solution of (\ref{eq:main_2}) if
\begin{enumerate}[label=(\roman*)]
\item $\frac{\partial u}{\partial t} \in L^2(0,T;\bL^2(\Omega))$;
\item for every vector field $\varphi \in \bC^1(\overline{\Omega} \times [0,T])$, with $\varphi(x,t) = 0$ if $x \in \partial \Omega$ or $t=T$,
    \begin{multline}\label{eq:weak_sol}
      \int_0^T \left( \rho \frac{\partial u}{\partial t} , \frac{\partial \varphi}{\partial t} \right) \, dt - \int_0^T \left[ \left( B_{ij} \prescript{C}{0}{D}_t^{\alpha/2} \frac{\partial u}{\partial x_j} , \prescript{}{t}{D}_T^{\alpha/2} \frac{\partial \varphi}{\partial x_i}  \right) + \left( A_{ij} \frac{\partial u}{\partial x_j} , \frac{\partial \varphi}{\partial x_i} \right) \right] \, dt \\
      = -\int_0^T \langle f,\varphi \rangle \, dt - (h,\varphi(x,0)),
    \end{multline}
    where, as before, the summation over the repeating indices is assumed and we define, motivated by (\ref{eq:Caputo_frac_der}),
    \begin{equation} \label{eq:weak_Caputo_frac_der}
      \prescript{C}{0}{D}_t^{\alpha/2} \frac{\partial u}{\partial x_j} := \prescript{}{0}{D}_t^{\alpha/2} \left( \frac{\partial u}{\partial x_j} - \frac{\partial g}{\partial x_j} \right),
    \end{equation}
    with the fractional Riemann-Liouville derivative $\prescript{}{0}{D}_t^{\alpha/2}$ given by (\ref{eq:R-L_der});
\item $u(0) = g$.
\end{enumerate}

\begin{Remark} \label{rm:weak_sol}
  \begin{enumerate}[label=(\roman*)]
  \item Under the hypothesis above on $u$ and $g$, it can be shown that $u \in C([0,T];\bL^2(\Omega))$ (see Section~5.9.2 in \cite{Ev2010}), thus the equality (iii) makes sense.
  \item In a classical framework: $f \in \bC(\overline{\Omega} \times [0,T])$, $g,h \in \bC^1(\overline{\Omega})$, every classical solution $u \in \bC^2(\overline{\Omega} \times [0,T])$ of (\ref{eq:main_2}) is also a weak solution. Indeed, by Lemma~\ref{lm:space-time_regularity} we have $u \in H_0^{\alpha/2}(0,T;\bH_0^1(\Omega))$. Thereby only left to verify (\ref{eq:weak_sol}) for every function $\varphi \in \bC^1(\overline{\Omega} \times [0,T])$ with $\varphi(x,t) = 0$ if $x \in \partial \Omega$ or $t=T$.  This can be done by multiplying the fist equation of (\ref{eq:main_2}) by $\varphi$ and then integrate by parts in space, in the usual way, and in time with formula (\ref{eq:frac_var_2}); recall also Lemma~\ref{lm:frac_der}.
  \item By an approximaption argument, it is not difficult to show that (\ref{eq:weak_sol}) is still true for $\varphi \in H^1(0,T;\bH_0^1(\Omega))$ such that $\varphi(x,T)=0$ for all $x \in \Omega$; recall (\ref{eq:classical_time_Sobolev}) for the definition of this space. 
  \end{enumerate}
\end{Remark}


\subsection{Galerkin approximations}
\label{sec:galerk-appr}

To find weak solutions to (\ref{eq:main_2}) we use the Galerkin method: to construct 
solutions of certain finite-dimensional approximations of the problem and then pass to the limit (see Sections 7.1 and 7.2 in \cite{Ev2010}.)

Let $\{ w_k \}_{k=1}^\infty$ be an \emph{orthogonal basis} of $\bH_0^1(\Omega)$ that, additionally, is an \emph{orthonormal basis} of $\bL^2(\Omega)$. One of such an orthogonal basis can be assembled with the eigenfuntions of the Laplace operator in the bounded domain $\Omega$.

Fix a positive integer $m$ and consider the function
\begin{equation}\label{eq:u_m}
  u_m(t) := \sum_{k=1}^m d_m^k(t)w_k,
\end{equation}
We select next the coefficients $d_m^k(t)$ to make $u_m$ an approximate solution to (\ref{eq:main_2}) in the following sense: $d_m^k(t)$, $k=1,\dots,m$, satisfy the following projection of (\ref{eq:main_2}) onto the finite-dimensional subspace spanned by $\{w_k \}_{k=1}^m$: for $k=1,\dots,m$,
\begin{align}
  (\rho u_m'',w_k) + \left( B_{ij} \prescript{C}{0}{D}_t^\alpha \frac{\partial u_m}{\partial x_j}, \frac{\partial w_k}{\partial x_i} \right) + \left( A_{ij} \frac{\partial u_m}{\partial x_j}, \frac{\partial w_k}{\partial x_i} \right) &= \langle f,w_k \rangle, & & t \in (0,T), \label{eq:approx} \\
  d_m^k(0) &= (g,w_k), \label{eq:approx_bound} \\
  {d_m^k}'(0) &= (h,w_k), \label{eq:approx_initial}
\end{align}
where the summation over the indices $i,j=1,\dots,n$ is assumed. This system of ordinary fractional differential equations can be solved by transforming it in a Volterra type equation to find a unique solution $u_m \in C^2([0,T];\bH_0^1(\Omega))$ of the form (\ref{eq:u_m}) that satisfies (\ref{eq:approx})-(\ref{eq:approx_initial}), see Property~\ref{Prop:App-Volterra} in the Appendix.


\subsection{Energy estimates}
\label{sec:energy-estimates}

Before sending $m \to \infty$ in the Galerking approximations, we need to control some important norms of the approximate solutions $u_m$. In fact, we will prove that for every $m \in \N$, there exists $C=C(\nu,\rho_0,\Omega,T,\alpha)$ such that
\begin{multline} \label{eq:approx_energy_est}
  \max_{0 \le t \le T} \left( \| u_m(t) \|_{\bH_0^1(\Omega)} + \| u_m'(t) \|_{\bL^2(\Omega)} \right) + \| u_m \|_{H_0^{\alpha/2}(0,T;\bH_0^1(\Omega))} \\
  \le C \left( \| f \|_{L^2(0,T;\bH^{-1}(\Omega))} + \| g \|_{\bH_0^1(\Omega)} + \| h \|_{\bL^2(\Omega)} \right),
\end{multline}
In what follows, $C$ represents a positive constant that \emph{could change} from one inequality to the next, but only depends on $\nu,\rho_0,\Omega,T,\alpha$.

To estimate the first term of the left-hand side of (\ref{eq:approx_energy_est}), we multiply (\ref{eq:approx}) by ${d_m^k}'$ and sum $k=1,\dots,m$ to find
\begin{equation*}
  (\rho u_m'',u_m') + \left( B_{ij} \prescript{C}{0}{D}_t^\alpha \frac{\partial u_m}{\partial x_j}, \frac{\partial u_m'}{\partial x_i} \right) + \left( A_{ij} \frac{\partial u_m}{\partial x_j}, \frac{\partial u_m'}{\partial x_i} \right) = \langle f,u_m' \rangle, \quad t \in (0,T).
\end{equation*}
Observe that this equation can be written in the following way (recall (\ref{eq:frac_der_Caputo}))
\begin{equation} \label{eq:energy_variation}
  E'(t) + \left( B_{ij} \prescript{}{0}{I}_t^{1-\alpha} \frac{\partial u_m'}{\partial x_j}, \frac{\partial u_m'}{\partial x_i} \right) = \langle f,u_m' \rangle,
\end{equation}
where $E(t)$ is the total energy of $u$ at time $t$, i.e. the sum of the kinetic and elastic energies,
\begin{equation*}
  E(t) := \frac{1}{2} \| \sqrt{\rho} u_m'(t) \|_{\bL^2(\Omega)}^2 + \frac{1}{2} \left( A_{ij} \frac{\partial u_m}{\partial x_j}(t), \frac{\partial u_m}{\partial x_i}(t) \right).
\end{equation*}
For a given $0 \le t \le T$, we integrate (\ref{eq:energy_variation}) in the time interval $(0,t)$ to obtain
\begin{equation} \label{eq:energy_balance}
  E(t)-E(0) = \int_0^t \langle f,u_m' \rangle \, ds - \int_0^t \left( B_{ij} \prescript{}{0}{I}_s^{1-\alpha} \frac{\partial u_m'}{\partial x_j}, \frac{\partial u_m'}{\partial x_i} \right) \, ds,
\end{equation}
This expression can be seen as an energy balance: if we consider the total energy of $u$ as the sum of its kinetic and elastic energies, then the change of this quantity, along the time interval $(0,t)$, is measured by the work done by the external force $f$ minus the energy dissipated by the fractional viscosity, represented by the last term in the right-hand side of the previous equation; next lemma shows that in fact this quantity is nonnegative.

\begin{Lemma} \label{lm:viscosity_energy}
  For every $t \in [0,T]$ and $u \in L^2(0,T;\bH_0^1(\Omega))$ we have
  \begin{equation} \label{eq:viscosity_energy}
    \int_0^t \left( B_{ij} \prescript{}{0}{I}_s^{1-\alpha} \frac{\partial u}{\partial x_j}, \frac{\partial u}{\partial x_i} \right) \, ds \ge 0,
  \end{equation}
  where $\prescript{}{0}{I}_s^{1-\alpha}$ is the Riemann-Liouville fractional integral defined in (\ref{eq:R-L_int}).
\end{Lemma}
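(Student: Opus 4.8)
The plan is to localize the integral in the space variable, reduce the integrand to a sum of \emph{scalar} time-quantities of the form
\[
  \int_0^t \left( \prescript{}{0}{I}_s^{1-\alpha} q \right)(s)\, q(s)\, ds, \qquad q \in L^2(0,t),
\]
and recognize each of these as nonnegative by means of the identities established in Section~3. Indeed, applying the fractional integration-by-parts formula (\ref{eq:frac_var_I}) on the interval $(0,t)$ with the fraction $1-\alpha \in (0,1)$, and then the energy identity (\ref{eq:energy_equiv_I}) of Theorem~\ref{thm:energy_equiv}, one gets
\[
  \int_0^t \left( \prescript{}{0}{I}_s^{1-\alpha} q \right) q\, ds
  = \int_0^t \left( \prescript{}{0}{I}_s^{(1-\alpha)/2} q \right)\!\left( \prescript{}{s}{I}_t^{(1-\alpha)/2} q \right) ds
  = \cos\!\left( \tfrac{(1-\alpha)\pi}{2} \right)\left\| \prescript{}{0}{I}_s^{(1-\alpha)/2} q \right\|_{L^2(0,t)}^2 \ \ge\ 0,
\]
the last inequality being forced by $1-\alpha \in (0,1)$, so that $\cos((1-\alpha)\pi/2)>0$. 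This sign is the whole point of the lemma.

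To carry out the reduction I would first write $\xi_j := \partial u/\partial x_j \in L^2(0,T;\bL^2(\Omega))$; by Property~\ref{Prop:App-bound} also $\prescript{}{0}{I}_s^{1-\alpha}\xi_j \in L^2(0,T;\bL^2(\Omega))$, and one checks (from the definition (\ref{eq:R-L_int}) together with density of $S$) that $\prescript{}{0}{I}_s^{1-\alpha}\xi_j$ is represented a.e.\ by the classical Riemann-Liouville integral performed in $s$, pointwise in $x$. Hence the integrand in (\ref{eq:viscosity_energy}) belongs to $L^1(\Omega\times(0,t))$ and Fubini rewrites that expression as $\int_\Omega I(x)\,dx$, where $I(x) := \int_0^t \sum_{i,j,k,l} B_{ij}^{kl}(x)\,(\prescript{}{0}{I}_s^{1-\alpha}\xi_j)_l(x,s)\,(\xi_i)_k(x,s)\,ds$. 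Fix $x$ for which hypothesis~\ref{H1} holds; by the symmetry relations and ellipticity there, the quadratic form $M=(M_1,\dots,M_n)\mapsto \sum_{i,j} B_{ij}(x)M_j\cdot M_i$ on $(\R^n)^n$ is symmetric and positive semi-definite, so it has a symmetric square root $R(x)$. Setting $P(x,s) := \big(\partial u/\partial x_1(x,s),\dots,\partial u/\partial x_n(x,s)\big)$ and $Q(x,s) := R(x)P(x,s)$, and using that the constant-in-$s$ linear map $R(x)$ commutes with $\prescript{}{0}{I}_s^{1-\alpha}$, one obtains
\[
  I(x) = \sum_{m}\int_0^t\left( \prescript{}{0}{I}_s^{1-\alpha} Q_m(x,\cdot) \right)(s)\, Q_m(x,s)\, ds,
\]
where $Q_m$ are the coordinates of $Q$ in a fixed orthonormal basis of $(\R^n)^n$ and $Q_m(x,\cdot) \in L^2(0,t)$ for a.e.\ $x$. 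By the scalar computation above each summand is $\ge 0$, hence $I(x)\ge 0$ for a.e.\ $x$, and integrating over $\Omega$ yields (\ref{eq:viscosity_energy}).

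The step I expect to be the main obstacle is the reduction itself — specifically, reconciling the \emph{one-sided} placement of $\prescript{}{0}{I}_s^{1-\alpha}$ on a single factor with the tensorial coupling encoded by $B$: this is precisely why one must first diagonalize $B(x)$ (equivalently, pass through the square root $R(x)$) before invoking (\ref{eq:frac_var_I}) and (\ref{eq:energy_equiv_I}), rather than applying those identities directly to $\xi_i,\xi_j$. A secondary technical point, to be handled with care, is the identification of the space-time operator (\ref{eq:R-L_int}) with the pointwise-in-$x$ Riemann-Liouville integral, which is what legitimizes the use of Fubini and of the one-dimensional theory of Section~3. Both points are routine but essential; once they are settled the lemma follows at once, together with the sharper identity $\int_0^t(B_{ij}\prescript{}{0}{I}_s^{1-\alpha}\xi_j,\xi_i)\,ds = \cos\!\big(\tfrac{(1-\alpha)\pi}{2}\big)\sum_m\|\prescript{}{0}{I}_s^{(1-\alpha)/2}Q_m\|_{L^2(\Omega\times(0,t))}^2$, which both proves (\ref{eq:viscosity_energy}) and exhibits the dissipated viscous energy explicitly as a nonnegative quantity.
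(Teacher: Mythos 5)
Your proof is correct, and its analytic core is the same as the paper's: both arguments reduce (\ref{eq:viscosity_energy}) to the scalar inequality $\int_0^t \left( \prescript{}{0}{I}_s^{1-\alpha} q \right) q \, ds \ge 0$ for real $q \in L^2(0,t)$, obtained from (\ref{eq:frac_var_I}) followed by (\ref{eq:energy_equiv_I}), the sign coming from $\cos((1-\alpha)\pi/2)>0$. Where you genuinely differ is the reduction. The paper stays global in space: it notes that $(u,v)_B$ is an inner product on $\bH_0^1(\Omega)$ equivalent to the usual one, chooses a $B$-orthogonal basis $\{w_k^B\}$, and uses density of simple functions of the form (\ref{eq:simple_funt_2}) built on that basis, so the cross terms vanish and the left side of (\ref{eq:viscosity_energy}) becomes $\sum_k \bigl( \int_0^t \prescript{}{0}{I}_s^{1-\alpha}\varphi_k\,\varphi_k\,ds \bigr)\,\| w_k^B \|_B^2$; the identification of the space--time operator (\ref{eq:R-L_int}) with the one-dimensional theory is then automatic, since it acts coefficientwise. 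You instead localize in $x$, diagonalize $B(x)$ pointwise through its symmetric square root, and invoke Fubini. Your route buys a pointwise-in-$x$ statement and the explicit dissipation identity you record at the end, at the cost of two technical verifications the paper's route sidesteps: the a.e.\ identification of (\ref{eq:R-L_int}) with the classical Riemann--Liouville integral taken slice by slice in $x$ (which you correctly flag, and which does follow from Property~\ref{Prop:App-bound} and density), and the measurability of $x \mapsto R(x)$. One point to state more carefully: the positive semi-definiteness of $M \mapsto B_{ij}^{kl}(x) M_{jl} M_{ik}$ on \emph{all} of $(\R^n)^n$, not only on symmetric arrays, is what makes $R(x)$ exist; this is the same reading of \ref{H1} the paper uses when it asserts that $(\cdot,\cdot)_B$ is an inner product, so it is shared ground, but it deserves an explicit sentence in your write-up.
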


\begin{proof}
  Observe first that, by hypothesis (H1) and the classical Poincaré inequality,
  \begin{equation*}
    (u,v)_B := \left( B_{ij} \frac{\partial u}{\partial x_j}, \frac{\partial v}{\partial x_i} \right)
  \end{equation*}
  generates an equivalent inner product to the usual one of $H_0^1(\Omega)$. So let us select a orthogonal basis $\{ w_k^B \}_{k=1}^\infty$ of $H_0^1(\Omega)$ with this new inner product. By (\ref{eq:R-L_int}) and the comments after, it is enough to prove (\ref{eq:viscosity_energy}) for $u$ of the form
  \begin{equation*}
    u(x,t) = \sum_{k=1}^m \varphi_k(t) w_k^B,
  \end{equation*}
  where $\varphi_j \in C_c^\infty(a,b)$. In this setting, (\ref{eq:viscosity_energy}) can be written as
  \begin{equation*}
    \sum_{k=1}^m \left( \int_0^t \prescript{}{0}{I}_s^{1-\alpha} \varphi_k(s) \varphi_k(s) \, ds  \right) \left( w_k^B,w_k^B \right)_B \ge 0,
  \end{equation*}
  which is true by (\ref{eq:frac_var_I}) and (\ref{eq:energy_equiv_I}).
\end{proof}

The previous lemma together with (\ref{eq:energy_balance}), hypothesis (H1) and (H2), Cauchy-Schwarz inequality and boundary conditions (\ref{eq:approx_bound}), (\ref{eq:approx_initial}) imply that there exists a positive constant $C=C(\nu,\rho_0)$ such that for every  $0 \le t \le T$ we have
\begin{multline}\label{eq:max_est}
\max \left\{ \| u_m'(t) \|_{\bL^2(\Omega)}^2, \left\| \frac{\partial u}{\partial x_i}(t) \right\|_{\bL^2(\Omega)}^2 \right\} \\
  \le C \left( \| f \|_{L^2(0,T;\bH^{-1}(\Omega))}^2 + \| g \|_{\bH_0^1(\Omega)}^2 + \| h \|_{\bL^2(\Omega)}^2 + \int_0^t \| u_m'(s) \|_{\bL^2(\Omega)}^2 \, ds \right).
\end{multline}
This estimate and the Gr\"{o}nwall's inequality imply the existence of a positive constant $C=C(\nu,\rho_0,T)$ such that
\begin{equation} \label{eq:max_est_u_prima}
  \max_{0 \le t \le T} \| u_m'(t) \|_{\bL^2(\Omega)} \le C \left( \| f \|_{L^2(0,T;\bH^{-1}(\Omega))} + \| g \|_{\bH_0^1(\Omega)} + \| h \|_{\bL^2(\Omega)} \right).
\end{equation}
On the other hand, by (\ref{eq:max_est}) and Poincar\'e inequality, there exists $C=C(\nu,\rho_0,\Omega,T)$ such that
\begin{equation} \label{eq:max_est_u}
    \max_{0 \le t \le T} \| u_m(t) \|_{\bH_0^1(\Omega)} \le C \left( \| f \|_{L^2(0,T;\bH^{-1}(\Omega))} + \| g \|_{\bH_0^1(\Omega)} + \| h \|_{\bL^2(\Omega)} \right).
  \end{equation}

  Let us now estimate the second term in the left-hand side of (\ref{eq:approx_energy_est}). To this end, multiply (\ref{eq:approx}) by $d_m^k - (g,w_k)$ and sum $k=1,\dots,m$, to obtain
\begin{multline*}
  (\rho u_m'',u_m - g_m) + \left( B_{ij} \prescript{C}{0}{D}_t^\alpha \frac{\partial u_m}{\partial x_j}, \frac{\partial}{\partial x_i}( u_m - g_m) \right) + \left( A_{ij} \frac{\partial u_m}{\partial x_j}, \frac{\partial u_m}{\partial x_i} - \frac{\partial g_m}{\partial x_i} \right) \\
  = \langle f,u_m-g_m \rangle, \quad t \in (0,T),
\end{multline*}
where $g_m := \sum_{k=1}^m (g,w_k)w_k$, the projection of $g$ on the finite-dimensional subspace spanned by $w_1,\dots,w_m$. We now integrate previous identity in the time-interval $(0,T)$ and then use integration by parts to find that
\begin{multline*}
  \int_0^T \left( B_{ij} \prescript{}{0}{D}_t^\alpha \frac{\partial}{\partial x_j} (u_m-g_m), \frac{\partial}{\partial x_i}( u_m - g_m) \right) dt + \int_0^T \left( A_{ij} \frac{\partial u_m}{\partial x_j}, \frac{\partial u_m}{\partial x_i}\right) dt \\
  = -(\rho u_m'(T),u_m(T)-g_m) + \int_0^T \left[ (\rho u_m',u_m') + \left( A_{ij} \frac{\partial u_m}{\partial x_j}, \frac{\partial g_m}{\partial x_i} \right)  + \langle f,u_m-g_m \rangle \right] dt.
\end{multline*}
Therefore, by the previous equation, hypothesis (H1) and (H2), Cauchy-Schwarz inequality and (\ref{eq:max_est_u_prima}), (\ref{eq:max_est_u}), there exists a positive constant $C=C(\nu,\rho_0,\Omega,T)$ such that
\begin{multline*}
  \int_0^T \left( \prescript{}{0}{D}_t^\alpha \frac{\partial}{\partial x_j} (u_m-g_m), \frac{\partial}{\partial x_i}( u_m - g_m) \right) dt \\
  \le C \left( \| f \|_{L^2(0,T;\bH^{-1}(\Omega))}^2 + \| g \|_{\bH_0^1(\Omega)}^2 + \| h \|_{\bL^2(\Omega)}^2 \right).
\end{multline*}
To conclude, observe that previous equation together with (\ref{eq:frac_var_2}), (\ref{eq:energy_equiv_D}) and the fractional Poincaré inequality, Theorem~\ref{thm:frac_Poincare}, imply that there exists a positive constant $C=C(\nu,\rho_0,\Omega,T)$ such that
\begin{equation*}
  \|u_m-g_m \|_{H_0^{\alpha/2}(0,T;\bH_0^1(\Omega))} \le \frac{C}{\cos(\alpha \pi/2)} \left( \| f \|_{L^2(0,T;\bH^{-1}(\Omega))} + \| g \|_{\bH_0^1(\Omega)} + \| h \|_{\bL^2(\Omega)} \right).
\end{equation*}
This not only finishes the proof of (\ref{eq:approx_energy_est}), but gives an interesting dependence of the estimate on the fractional order of differentiation $\alpha$. In fact, observe that previous estimate grows uncontrollably as $\alpha \to 1^-$.


\subsection{Existence of a weak solution}
\label{sec:exist-weak-solut}

With the energy estimate (\ref{eq:approx_energy_est}) we can take limit as $m \to \infty$ in (\ref{eq:approx})-(\ref{eq:approx_initial}). Indeed, observe that (\ref{eq:approx_energy_est}) implies the sequences $\{ u_m \}_m$, $\{u_m'\}_m$ and $\{u_m-g_m\}_m$ are bounded in $L^2(0,T;\bH_0^1(\Omega))$, $L^2(0,T;\bL^2(\Omega))$ and $H_0^{\alpha/2}(0,T;\bH_0^1(\Omega))$, respectively. Therefore, there exists a subsequence $\{ u_{m_l} \}_l$ and $u \in H_0^{\alpha/2}(0,T;\bH_0^1(\Omega))$, with $u' \in L^2(0,T;\bL^2(\Omega))$, such that
\begin{equation} \label{eq:weak_conv}
  \left\{
  \begin{aligned}
    & u_{m_l} \rightharpoonup u \in L^2(0,T;\bH_0^1(\Omega)), \\
    & \prescript{C}{0}{D}_t^{\alpha/2} u_{m_l} \rightharpoonup \prescript{C}{0}{D}_t^{\alpha/2} u \in L^2(0,T;\bH_0^1(\Omega)) \quad \text{and}\\
    & u_{m_l}' \rightharpoonup u' \in L^2(0,T;\bL^2(\Omega)).
  \end{aligned}
  \right.
\end{equation}
On the other hand, lets fix $N \in \N$ and consider $\varphi \in C^1([0,T];\bH_0^1(\Omega))$ as follows
\begin{equation}\label{eq:phi_sep}
  \varphi(x,t) = \sum_{k=1}^N d^ k(t) w_k,
\end{equation}
where $\{ d^k \}_{k=1}^N \subset C^1[0,T]$ and $d_k(T)=0$ for $k=1,\dots,N$. For $m \ge N$, multiply (\ref{eq:approx}) by $d^k$, sum $k=1,\dots,m$ and then integrate by parts in space and time, as in Remark~\ref{rm:weak_sol}$(ii)$, to obtain
\begin{equation} \label{eq:weak_sol_approx}
    \begin{aligned}
      \int_0^T \left( \rho \frac{\partial u_{m_l}}{\partial t} , \frac{\partial \varphi}{\partial t} \right) \, dt &- \int_0^T \left[ \left( B_{ij} \prescript{C}{0}{D}_t^{\alpha/2} \frac{\partial u_{m_l}}{\partial x_j} , \prescript{}{t}{D}_T^{\alpha/2} \frac{\partial \varphi}{\partial x_i}  \right) + \left( A_{ij} \frac{\partial u_{m_l}}{\partial x_j} , \frac{\partial \varphi}{\partial x_i} \right) \right] \, d t \\
      =& -\int_0^T \langle f,\varphi \rangle \, dt - (h,\varphi(x,0)).
    \end{aligned}
  \end{equation}
  By taking limit as $l \to \infty$ and using (\ref{eq:weak_conv}) we deduce that $u$ satisfies (\ref{eq:weak_sol}) for every $\varphi \in C^1([0,T];\bH_0^1(\Omega))$ of the form (\ref{eq:phi_sep}). To conclude (\ref{eq:weak_sol}) for $\varphi \in \bC^1(\overline{\Omega} \times [0,T])$, with $\varphi(x,t) = 0$ if $x \in \partial \Omega$ or $t=T$, we observe that all of such functions can be approximated by functions of the form (\ref{eq:phi_sep}) with the norm $\| \cdot \|_{H_0^{\alpha/2}(0,T;\bH_0^1(\Omega))}$.

  Only left to prove the initial condition $u(0)=g$. To this end, let us consider (\ref{eq:weak_sol_approx}) for any $\varphi \in C^2([0,T];\bH_0^1(\Omega))$, with $\varphi(x,T)=\varphi'(x,T)=0$ for all $x \in \Omega$. If we again integrate by parts in time and then take $l \to \infty$, we deduce that
  \begin{multline} \label{eq:weak_sol_2}
    \int_0^T \left( \rho u , \frac{\partial^2 \varphi}{\partial t^2} \right) \, dt - \int_0^T \left[ \left( B_{ij} \prescript{C}{0}{D}_t^{\alpha/2} \frac{\partial u}{\partial x_j} , \prescript{}{t}{D}_T^{\alpha/2} \frac{\partial \varphi}{\partial x_i}  \right) + \left( A_{ij} \frac{\partial u}{\partial x_j} , \frac{\partial \varphi}{\partial x_i} \right) \right] \, dt \\
    = -\int_0^T \langle f,\varphi \rangle \, dt - (h,\varphi(x,0)) + (g,\varphi'(x,0)).
  \end{multline}
  On the other hand, if we integrate by parts (\ref{eq:weak_sol}) we have
  \begin{multline*}
    \int_0^T \left( \rho u , \frac{\partial^2 \varphi}{\partial t^2} \right) \, dt - \int_0^T \left[ \left( B_{ij} \prescript{C}{0}{D}_t^{\alpha/2} \frac{\partial u}{\partial x_j} , \prescript{}{t}{D}_T^{\alpha/2} \frac{\partial \varphi}{\partial x_i}  \right) + \left( A_{ij} \frac{\partial u}{\partial x_j} , \frac{\partial \varphi}{\partial x_i} \right) \right] \, dt \\
    = -\int_0^T \langle f,\varphi \rangle \, dt - (h,\varphi(x,0)) + (u(0),\varphi'(x,0)).
  \end{multline*}
  By comparing this expression with (\ref{eq:weak_sol_2}), we conclude that $u(0)=g$, as $\varphi$ is arbitrary.

  Notice that the above solutions satisfies energy estimate (\ref{eq:energy_est}), as a consequence of taking $m \to \infty$ in (\ref{eq:approx_energy_est}) together with (\ref{eq:weak_conv}).


\subsection{Uniqueness}
\label{sec:uniqueness}

In order to prove that the above solution is unique, it is sufficient to verify that $u=0$ is the unique weak solution of (\ref{eq:main_2}) with $f \equiv g \equiv h \equiv 0$. Suppose then that $u$ is such a weak solution and fix $s \in (0,T)$. Le us consider
  \begin{equation*}
    v(x,t) =
    \begin{cases}
      \int_t^s u(x,\tau) \, d\tau & \text{if } 0 \le t \le s, \\
      0 & \text{if } s \le t \le T.
    \end{cases}
  \end{equation*}
  We can substitute $v$ in (\ref{eq:weak_sol}), by Remark~\ref{rm:weak_sol} $(iii)$, to find that  
  \begin{equation*}
    \int_0^s \left( \rho \frac{\partial u}{\partial t} , \frac{\partial v}{\partial t} \right) \, dt - \int_0^s \left( B_{ij} \prescript{C}{0}{D}_t^{\alpha/2} \frac{\partial u}{\partial x_j} , \prescript{}{t}{D}_s^{\alpha/2} \frac{\partial v}{\partial x_i}  \right)\, dt - \int_0^s \left( A_{ij} \frac{\partial u}{\partial x_j} , \frac{\partial v}{\partial x_i} \right) \, dt = 0.
  \end{equation*}

  Observe that $v_t = -u$ and $v(s)=0$, so we can transform the previous equation in
  \begin{equation} \label{eq:uniqueness}
    \frac{1}{2} \| \sqrt{\rho} u(s) \|_{\bL^2(\Omega)}^2 + \int_0^s \left( B_{ij} \prescript{}{0}{D}_t^{\alpha/2} \frac{\partial u}{\partial x_j} , \prescript{}{t}{D}_s^{\alpha/2} \frac{\partial v}{\partial x_i}  \right) \, dt + \frac{1}{2} \left( A_{ij} \frac{\partial v}{\partial x_j}(0) , \frac{\partial v}{\partial x_i}(0) \right) = 0.
  \end{equation}
  The third term of the left-hand side of this equation is nonnegative, by Hypothesis (H1); let us see that the second one is also nonnegative. To this end, notice that we can suppose that $u$ is of the form (\ref{eq:simple_funt_2}), as $u \in H_0^{\alpha/2}(0,T;\bH_0^1(\Omega))$ is a limit of these functions, by Definition~\ref{Def:frac_space-time}. Thereby, suppose $u$ have such a form, so we can compute freely and use (\ref{eq:frac_var_2}) to obtain
  \begin{align*}
    \int_0^s \left( B_{ij} \prescript{}{0}{D}_t^{\alpha/2} \frac{\partial u}{\partial x_j} , \prescript{}{t}{D}_s^{\alpha/2} \frac{\partial v}{\partial x_i}  \right) \, dt &= \int_0^s \left( B_{ij}\frac{\partial u}{\partial x_j} , \prescript{}{t}{D}_s^\alpha \frac{\partial v}{\partial x_i}  \right) \, dt \\
    &= \int_0^s \left( B_{ij}\frac{\partial u}{\partial x_j} , \prescript{}{t}{I}_s^{1-\alpha} \frac{\partial u}{\partial x_i}  \right) \, dt,
  \end{align*}
  and this last integral is nonnegative by Lemma~\ref{lm:viscosity_energy}. We conclude then from (\ref{eq:uniqueness}) that $u(s)=0$, and as $s \in (0,T)$ is arbitrary $u \equiv 0$.

  


\appendix

\section{Appendix}

In this appendix we collect several useful properties of fractional operators used throughout the article; we provide some of their proofs for completeness. For more details see \cite{SaKiMa93}. In what follows $\alpha \in (0,1)$ and $(a,b)$ is a \emph{bounded interval}.  

Let us start by recalling the Fourier transform and its inverse:
\begin{gather*}
  \F(u)(\omega) = \hat u (\omega) := \int_{-\infty}^\infty u(t) e^{-i \omega t} \, dt, \\
  \F^{-1}(u)(t) = \check u (t) := \int_{-\infty}^\infty u(\omega) e^{-i \omega t} \, d\omega,
\end{gather*}
where $i = \sqrt{-1}$, and their following well known properties:
\begin{equation} \label{eq:App_mult_formula}
    \int_{-\infty}^\infty \hat u v = \int_{-\infty}^\infty u \hat v, \quad \hat u(\omega) = \check u (-\omega).
  \end{equation}
  
The proof of the following important theorem can be found in Section~7.1 of \cite{SaKiMa93}.
  \begin{Property}
    \label{Prop:App-Fourier_deri}
    Given a functions $\varphi \in C_c^\infty(\R)$, the Fourier transform of its Riemann-Liouville fractional derivatives and integrals satisfy:
    \begin{enumerate}[label=(\roman*)]
    \item  if $\alpha \ge 0$,
      \begin{equation*}
        \F (\prescript{}{-\infty}{D}_t^\alpha \varphi)(\omega) = (i \omega)^\alpha \hat \varphi(\omega), \quad \F (\prescript{}{t}{D}_\infty^\alpha \varphi)(\omega) = (-i \omega)^\alpha \hat \varphi(\omega);
      \end{equation*}
\item if $\alpha \in [0,1)$,
  \begin{equation*}
    \F (\prescript{}{-\infty}{I}_t^{\alpha/2} \varphi)(\omega) = \frac{\hat \varphi(\omega)}{(i \omega)^{\alpha/2}}, \quad \F (\prescript{}{t}{D}_\infty^{\alpha/2} \varphi)(\omega) = \frac{\hat \varphi(\omega)}{(-i \omega)^{\alpha/2}}.
  \end{equation*}
\end{enumerate}
\end{Property}

It is important to mention that in $(ii)$ we restrict ourselves to $\alpha \in [0,1)$ to keep all the expressions in the $L^2(\R)$ space. We recall that
\begin{equation} \label{eq:complex_power}
  (i \omega)^\alpha = |\omega|^\alpha e^{\frac{\alpha \pi i}{2} \sgn \omega},
\end{equation}
where $\sgn$ is the sign function.

The proof of the following result can be found in \cite{SaKiMa93}, Theorem 2.6. 

\begin{Property} \label{Prop:App-bound}
  For all $\beta \ge 0$, $\prescript{}{a}{I}_t^\beta$ and $\prescript{}{t}{I}_b^\beta$ are bounded linear operators in $L^2(a,b)$.
\end{Property}

\begin{Property}[Inverse] \label{Prop:App-inverse}
  For all $u \in H_0^{\alpha/2}(a,b)$ we have
  \begin{gather}
    \prescript{}{a}{I}_t^{\alpha/2} \prescript{}{a}{D}_t^{\alpha/2} u = \prescript{}{a}{D}_t^{\alpha/2} \prescript{}{a}{I}_t^{\alpha/2} u = u, \label{eq:App-inverse} \\
    \prescript{}{t}{I}_b^{\alpha/2} \prescript{}{t}{D}_b^{\alpha/2} u = \prescript{}{t}{D}_b^{\alpha/2} \prescript{}{t}{I}_b^{\alpha/2} u = u.    
  \end{gather}
\end{Property}

\begin{proof}
  Observe that it is enough to prove the result for $u \in C_c^\infty(a,b)$, as by definition all the other elements in $H_0^{\alpha/2}(a,b)$ are limit of sequences of these elements (recall also Lemma~\ref{lm:frac_der} and Property~\ref{Prop:App-bound}). So let $u \in C_c^\infty(a,b)$. To prove (\ref{eq:App-inverse}) we just have to take Fourier transform in both sides of the equations and use Property~\ref{Prop:App-Fourier_deri}.
\end{proof}

\begin{Property}[Fractional integration by parts] \label{Prop:App-parts}
  For every $\varphi, \psi \in C_c^\infty(\R)$, we have
  \begin{align}
    \int_{-\infty}^\infty \prescript{}{-\infty}{D}_t^{\alpha/2} \varphi \psi = \int_{-\infty}^\infty \varphi \prescript{}{t}{D}_\infty^{\alpha/2} \psi, \label{eq:App-frac_int_parts} \\
    \int_{-\infty}^\infty \prescript{}{-\infty}{I}_t^{\alpha/2} \varphi \psi = \int_{-\infty}^\infty \varphi \prescript{}{t}{I}_\infty^{\alpha/2} \psi. \label{eq:App-frac_int_parts_2}
  \end{align}
\end{Property}

\begin{proof}
  Let us prove (\ref{eq:App-frac_int_parts}), the proof of (\ref{eq:App-frac_int_parts_2}) is similar. Indeed, by Property~\ref{Prop:App-Fourier_deri} and (\ref{eq:App_mult_formula}) we have
  \begin{align*}
    \int_{-\infty}^\infty \prescript{}{-\infty}{D}_t^{\alpha/2} \varphi \psi &= \int_{-\infty}^\infty \varphi \left( (it)^{\alpha/2} \check \psi  \right)^\wedge \\
    &= \int_{-\infty}^\infty \varphi \left( (-it)^{\alpha/2} \check \psi(-t)  \right)^\vee = \int_{-\infty}^\infty \varphi \prescript{}{t}{D}_\infty^{\alpha/2} \psi.
  \end{align*}
\end{proof}
We observe that (\ref{eq:App-frac_int_parts}) is still true for every $\alpha > 0$.

\begin{Property}[Semigroup property]
  \begin{enumerate}[label=(\roman*)]
  \item Given $\beta, \gamma \ge 0$, if $u \in L^1(a,b)$ then
    \begin{equation} \label{eq:App-semi}
      \prescript{}{a}{I}_t^\beta \prescript{}{a}{I}_t^\gamma u = \prescript{}{a}{I}_t^{\beta + \gamma} u, \quad \prescript{}{t}{I}_b^\beta \prescript{}{t}{I}_b^\gamma u = \prescript{}{t}{I}_b^{\beta+\gamma} u.
    \end{equation}
  \item If $u \in C^1[a,b]$ and $u(a)=0$ then
    \begin{equation} \label{eq:App-simi-der}
      \prescript{}{a}{D}_t^{\alpha/2} \prescript{}{a}{D}_t^{\alpha/2} u = \prescript{}{a}{D}_t^\alpha u;
    \end{equation}
  \end{enumerate}
\end{Property}

\begin{proof}
  The proof of $(i)$ can be found in Section~2.3 of \cite{SaKiMa93}. To prove $(ii)$ we use the fact that $u(a)=0$ together with (\ref{eq:frac_der_AC}) and $(i)$ to find that
  \begin{align*}
    \prescript{}{a}{D}_t^{\alpha/2} \prescript{}{a}{D}_t^{\alpha/2} u &= \frac{d}{dt} \prescript{}{a}{I}_t^{1- \alpha/2} \prescript{}{a}{I}_t^{1- \alpha/2} u' \\
                                                                      & = \frac{d}{dt} \prescript{}{a}{I}_t^{2- \alpha} u' \\
                                                                      &= \prescript{}{a}{I}_t^{1- \alpha} u' = \prescript{}{a}{D}_t^\alpha u.
  \end{align*}
\end{proof}

\begin{Property} \label{Prop:App-Volterra}
  Given $T>0$, vectors $a,b,c,d \in \R^m$ and a vector field $f \in L^1(0,T)^m$, there exists a unique vector field $u \in C^2[0,T]^m$, solution of the fractional ODE
  \begin{equation} \label{eq:App-ODE}
    \left\{
      \begin{gathered}
        u'' + b \prescript{C}{0}{D}_t^\alpha u + au = f(t), \quad t \in (0,T), \\
        u(0) = c, \quad u'(0)= d.
      \end{gathered}
    \right.
  \end{equation}
\end{Property}

\begin{proof}
  To solve (\ref{eq:App-ODE}) we use a classical approach, to transform the equation in a Volterra integral equation of the second kind: indeed, after integrating once (\ref{eq:App-ODE}) and use (\ref{eq:Caputo_frac_der}) we obtain
  \begin{equation}\label{eq:App-der}
    u' = d + \prescript{}{0}{I}_t f - b \prescript{}{0}{I}_t^{1-\alpha}(u-c) - a \prescript{}{0}{I}_t u. 
  \end{equation}
  After integrating once more and using (\ref{eq:App-semi}), we deduce that
  \begin{equation} \label{eq:App-Volterra}
    u(t) = c + dt + \frac{bc}{\Gamma(3-\alpha)} t^{2-\alpha} + \prescript{}{0}{I}_t^2 f(t) - b \prescript{}{0}{I}_t^{2-\alpha} u(t) - a \prescript{}{0}{I}_t^2 u(t), 
  \end{equation}
  where we have used the identity (see Section~2.5 of \cite{SaKiMa93})
  \begin{equation*}
    \prescript{}{0}{I}_t^\beta 1 = \frac{t^\beta}{\Gamma(\beta+1)}, \quad \beta \ge 0.
  \end{equation*}
  The Volterra integral equation (\ref{eq:App-Volterra}) can be solved by a classical fixed point argument, we omit the details. To conclude, observe that this solution belongs to the space $C^2[0,T]^m$, by (\ref{eq:App-der}) and (\ref{eq:App-Volterra}).
\end{proof}



\begin{thebibliography}{10}

\bibitem{BaTo83-1}
R.~L. Bagley and P.~J. Torvik.
\newblock Fractional calculus - a different approach to the analysis of
  viscoelastically damped structures.
\newblock {\em AIAA Journal}, 21(5):741--748, 1983.

\bibitem{BaTo83-2}
R.~L. Bagley and P.~J. Torvik.
\newblock A theoretical basis for the application of fractional calculus to
  viscoelasticity.
\newblock {\em Journal of Rheology}, 27(3):201--210, 1983.

\bibitem{banks_hu_kenz_2011}
H.~T. Banks, S.~Hu, and Z.~R. Kenz.
\newblock A brief review of elasticity and viscoelasticity for solids.
\newblock {\em Advances in Applied Mathematics and Mechanics}, 3(1):1–51,
  2011.

\bibitem{ErRo06}
V.~J. Ervin and J.~P. Roop.
\newblock Variational formulation for the stationary fractional advection
  dispersion equation.
\newblock {\em Numer. Methods Partial Differential Equations}, 22(3):558--576,
  2006.

\bibitem{Ev2010}
L.~C. Evans.
\newblock {\em Partial Differential Equations}.
\newblock Graduate studies in mathematics. American Mathematical Society, 2010.

\bibitem{FaMo92}
M.~Fabrizio and A.~Morro.
\newblock {\em Mathematical Problems in Linear Viscoelasticity}.
\newblock SIAM Studies in Applied Mathematics, Vol. 12. Philadelphia, 1992.

\bibitem{Gr85}
P.~Grisvard.
\newblock {\em Elliptic problems in nonsmooth domains}, volume~24 of {\em
  Monographs and Studies in Mathematics}.
\newblock Pitman (Advanced Publishing Program), Boston, MA, 1985.

\bibitem{Ma97}
F.~Mainardi.
\newblock {\em Fractional Calculus}.
\newblock In: Carpinteri A., Mainardi F. (eds) Fractals and Fractional Calculus
  in Continuum Mechanics. International Centre for Mechanical Sciences (Courses
  and Lectures), vol 378. Springer, Vienna, 1997.

\bibitem{Ma2010}
F.~Mainardi.
\newblock {\em Fractional Calculus and Waves in Linear Viscoelasticity}.
\newblock Imperial College Press, 2010.

\bibitem{SaKiMa93}
S.~G. Samko, A.~A. Kilbas, and O.~I. Marichev.
\newblock {\em Fractional Integrals and Derivatives: Theory and Applications}.
\newblock Gordon and Breach Science Publishers, Switzerland, 1993.

\bibitem{Ta07}
L.~Tartar.
\newblock An introduction to sobolev spaces and interpolation spaces.
\newblock {\em Lect. Notes Unione Mat. Ital.}, 3, 2007.

\bibitem{Yo80}
K.~Yosida.
\newblock {\em Functional Analysis (6th ed.)}.
\newblock Springer, 1980.

\bibitem{Yo_etal_2019}
D.~Younesian, A.~Hosseinkhani, H.~Askari, and E.~Esmailzadeh.
\newblock Elastic and viscoelastic foundations: a review on linear and
  nonlinear vibration modeling and applications.
\newblock {\em Nonlinear Dynamics}, 97:853--895, 2019.

\end{thebibliography}

\end{document}